\documentclass[12pt]{article}
\usepackage{graphicx}
\usepackage{amsmath}
\usepackage{amsfonts}
\usepackage{amssymb}
\usepackage{url}
\newtheorem{theorem}{Theorem}[section]

\newtheorem{corollary}[theorem]{Corollary}

\newtheorem{defn}[theorem]{Definition}

\newtheorem{remark}[theorem]{Remark}

\numberwithin{equation}{section}

\newenvironment{proof}[1][Proof]{\textbf{#1.} }{\ \rule{0.5em}{0.5em}}

\usepackage{color}
\textheight=9.1in
\topmargin=-0.5in \everymath{\displaystyle}
\textwidth=6.5in \oddsidemargin=-0.0in \evensidemargin=0 in
\begin{document}
\baselineskip=18pt

\pagenumbering{arabic}

\begin{center}
{\Large {\bf Spatial Models of Vector-Host Epidemics with Directed Movement of Vectors Over Long Distances}}

\bigskip

W.E. Fitzgibbon and J.J. Morgan

Department of Mathematics

University of Houston\\
Houston, TX 77204, USA

\bigskip

Glenn F. Webb and Yixiang Wu

Department of Mathematics

Vanderbilt University\\
Nashville, TN 37212, USA

\bigskip

\vspace{0.2in}
\end{center}

\begin{abstract}
We investigate a time-dependent spatial vector-host epidemic model with non-coincident domains for the vector and host populations. The host population resides in small non-overlapping sub-regions, while the vector population resides throughout a much larger region. The dynamics of the populations are modeled by a reaction-diffusion-advection compartmental system of partial differential equations. The disease is transmitted through vector and host populations in criss-cross fashion. We establish global well-posedness and uniform a prior bounds as well as the long-term behavior. The model is applied to simulate the outbreak of bluetongue disease in sheep transmitted by midges infected with bluetongue virus. We show that the long-range directed movement of the midge population, due to wind-aided movement, enhances the transmission of the disease to sheep in distant sites.   
\end{abstract}

\noindent 2000 Mathematics Subject Classification: 92A15, 35B40, 35M20, 35K57, 35Q92.
\medskip

\noindent Keywords: vector-host, reaction-diffusion-advection, asymptotic behavior, bluetongue.

\section{Introduction}
Many diseases, such as malaria, dengue fever, Zika, Chagas disease in humans, and bluetongue disease in sheep and other ruminants, are transmitted in criss-cross fashion between vectors and hosts. Vectors, such as mosquitoes, fleas, ticks, and midges, transmit microbial disease agents to animal and human host populations. Susceptible vectors  become infected by interaction with infected hosts, and infected vectors transmit the disease to susceptible hosts. Mathematical models for the spatial spread of such diseases have been developed by many authors, e.g. \cite{Allen2008, Anita2009, Capasso1978, Fitzgibbon1994, Fitzgibbon1995, Fitzgibbon1996, Fitzgibbon2004reaction, Fitzgibbon2005, Fitzgibbon19952, Fitzgibbon2008, fitzgibbon2017outbreak, Webb1981}. 


In this paper, we investigate a vector-host model describing the spatio-temporal spread of an epidemic disease, with 
host populations residing in non-overlapping small domains, and vectors residing in a much larger region. We assume that the host population has both a random spatial movement and a directed spatial movement.  We will show that the spread of a vector-host epidemic disease over a large geographic region can result from an outbreak in a relatively small host subregion, by directed long-range movement of vectors to distant host subregions.

The directed movement can result from external forces such as wind-aided movement. The role of wind-aided movement in the transmission of disease has been studied by many researchers, including \cite{Braverman, Burgin, Ducheyne, Murray1987A, Johansen, Sedda, Ssematimba}. In \cite{Ssematimba} the authors study the wind-borne transportation of Highly Pathogenic Avian influenza virus between farms, where the movement of pathogen particles is described by a Gaussian Plume Model, which is essentially an advection-diffusion model.   In \cite{Sedda}, the authors use stochastic simulations to study the impact of the movement of midges due to wind on the spread of bluetongue virus in Europe. In \cite{Burgin}, Burgin {\it et al.} the authors use an atmospheric dispersion model to study the impact of wind on the spread of bluetongue disease in sheep.

The organization of this paper is as follows: in Section 2 we analyze a general model of this class of epidemics, in which both vectors and hosts are diffusing in their domains;  in Section 3 we analyze a special case in which only the vectors are diffusing in their domain;  in Section 4 we apply the results in Section 3 to an outbreak of 
bluetongue disease in sheep. The results of Section 4 address the issue of the recent spread of bluetongue disease in Europe.

\section{The Model}

\subsection{The vector population model}
The vector habitat is considered to be a region sufficiently large to contain numerous sub regions which contain host populations.  We shall depart from the standard practice of using bounded regions to define species habitats and simply define the vector habitat as $\mathbb{R}^2$. We assume that the spatially distributed vector population $V(x, t)$  is subject to logistic demographics with spatially dependent linear birth term of the form $\beta(x)V(x,  t)$  and a quadratic self-limiting term of the form $-m(x)V^2(x, t)$.   Dispersion of the vectors is modeled by diffusion and advection with diffusion describing the natural movement of the vectors and advection accounting for the effective of the wind.  These considerations give rise to the following reaction-diffusion-advection type equation:
 \begin{equation}
\left\{
\begin{array} {lll}
   \frac{\partial V}{\partial t}-\triangledown\cdot D(x)\triangledown  V+\overrightarrow{C}(x, t) \cdot \triangledown V=\beta(x) V-m(x) V^2,   \label{vector}&\ \ \ x\in\mathbb{R}^2,\ t>0,\medskip\\
   V(x, 0)=V_0(x),  &\ \ \ x\in\mathbb{R}^2.
\end{array}
\right.
\end{equation}

Eq. \eqref{vector} is a classic semi-linear reaction-diffusion-advection equation commonly known as a convective Fisher-Kolmogorov equation. Fisher-Kolmogorov equations \cite{Fisher} first introduced in the 1930s remain of active interest and arise in a variety of applications. We assume that the diffusion coefficient $D(x)$ is up to order 2 uniformly bounded and continuous, strictly positive on $\mathbb{R}^2$, i.e.
\begin{itemize}
\item[A1.] $D\in C_b^2(\mathbb{R}^2)$;
\item[A2.] There exist $D_m, D_M>0$ such that $D_m\le D(x) \le D_M$ for all $x\in\mathbb{R}^2$.
\end{itemize}
 We make the following assumptions on the velocity field $\overrightarrow{C}(x,  t)$:
\begin{itemize}
\item[A3.]  $\overrightarrow{C}\in C(\mathbb{R}_+; C_b^1(\mathbb{R}^2))$;
\item[A4.] $\triangledown\cdot \overrightarrow{C}(\cdot, t)=0$ for all $t\ge 0$.
\end{itemize}
The terms $\beta(x)V$ and $m(x)V^2$ represent spatially dependent birth and logistic mortality rates of the vector population, respectively.  We require that
\begin{itemize}
\item[A5.] $\beta(x)\ge 0$ for all $x\in\mathbb{R}^2$, and $\beta\in C_b^1(\mathbb{R}^2)$ with $\|\beta\|_{\infty, \mathbb{R}^2}\le \beta^*<\infty$ for some positive constant $\beta^*$;
\item[A6.]  $m\in C_b^1(\mathbb{R}^2)$, and there exist positive constants $m_*, m^*$ such that $m_*\le m(x)\le m^*$ for all $x\in\mathbb{R}^2$.
\end{itemize}

We make the following assumption on the initial data:
\begin{itemize}
\item[A7.] $V_0(x)\ge 0$ for all $x\in \mathbb{R}^2$, and  $V_0\in C_b(\mathbb{R}^2)\cap L^1(\mathbb{R}^2)$.
\end{itemize}
Let $Q(s, t)$ be the space-time cylinder $\mathbb{R}^2\times (s, t)$.

The following theorem guarantees the global well-posedness of solutions to \eqref{vector}.
\begin{theorem}\label{theorem_vector}
Assume that (A1)-(A7) hold. Then there exists a unique classical global solution $V(x, t)$ of \eqref{vector} such that 
$$
0\le V(x, t)\le \max\left\{\frac{\beta^*}{m_*},  \|V_0\|_{\infty,   \mathbb{R}^2}\right\}, \ \ \ \text{for all } (x, t)\in Q(0, \infty).
$$
\end{theorem}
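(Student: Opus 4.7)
The plan is to produce a local bounded classical solution by standard semilinear parabolic theory on $\mathbb R^2$, trap it between the constant sub- and supersolutions $\underline V\equiv 0$ and $\overline V\equiv K:=\max\{\beta^*/m_*,\|V_0\|_{\infty,\mathbb R^2}\}$ via a comparison principle, and then use the resulting $L^\infty$ bound to continue the solution globally in time.

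For local existence in $C_b(\mathbb R^2)$, I would expand the divergence to rewrite the principal part as $-D(x)\Delta V+(\overrightarrow C(x,t)-\nabla D(x))\cdot\nabla V$, which by (A1)--(A3) is uniformly parabolic with bounded, H\"older continuous coefficients. The reaction $f(x,V)=\beta(x)V-m(x)V^2$ is polynomial in $V$ with coefficients bounded in $x$ by (A5)--(A6), hence locally Lipschitz on bounded subsets of $C_b(\mathbb R^2)$. A Banach fixed-point argument on the mild formulation (using smoothing estimates for the evolution family generated by the linear part), or equivalently classical Schauder theory, then yields a unique $V\in C^{2,1}(\mathbb R^2\times(0,T_{\max}))\cap C_b(\mathbb R^2\times[0,T_{\max}))$ together with the standard continuation alternative: either $T_{\max}=\infty$, or $\|V(\cdot,t)\|_{\infty,\mathbb R^2}\to\infty$ as $t\uparrow T_{\max}$.

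Next I would establish the two-sided pointwise bound on $[0,T_{\max})$. The constant $\underline V\equiv 0$ satisfies the equation with equality and $\underline V\le V_0$ by (A7); the constant $\overline V\equiv K$ has vanishing derivatives, and
\[
\beta(x)K-m(x)K^2=K\bigl(\beta(x)-m(x)K\bigr)\le K(\beta^*-m_*K)\le 0
\]
by the choice of $K$, so $\overline V$ is a supersolution with $V_0\le\overline V$. Setting $w:=V-K$ and using the factorization $f(x,V)-f(x,K)=w\bigl[\beta(x)-m(x)(V+K)\bigr]$, $w$ satisfies the linear parabolic inequality
\[
w_t-\nabla\cdot(D\nabla w)+\overrightarrow C\cdot\nabla w-c(x,t)\,w\le 0,\qquad c:=\beta-m(V+K),
\]
on $\mathbb R^2\times[0,T']$ for each $T'<T_{\max}$, with $c$ bounded (because $V$ is) and $w(\cdot,0)\le 0$. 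A symmetric argument applied to $-V$ (with coefficient $\beta-mV$ and nonpositive initial data) handles the lower bound.

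The main obstacle is that on the unbounded domain $\mathbb R^2$ the parabolic maximum principle is not automatic: without a growth restriction on $w$ it can fail in general. I would overcome this by invoking a Phragm\'en--Lindel\"of-type maximum principle for bounded classical solutions of linear parabolic inequalities with bounded coefficients (as in, e.g., the Aronson--Besala framework): since the local theory supplies $w$ bounded on $\mathbb R^2\times[0,T']$, an exponential weight of the form $e^{-\alpha t}(1+|x|^2)^{-1}$ combined with the classical interior maximum principle on expanding balls forces $\sup w\le 0$, yielding $V\le K$, and analogously $V\ge 0$. The uniform bound $\|V(\cdot,t)\|_{\infty,\mathbb R^2}\le K$ then rules out the blow-up alternative, so $T_{\max}=\infty$ and the proof is complete.
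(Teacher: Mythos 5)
Your proposal is correct and follows essentially the same route as the paper: local well-posedness by a fixed-point/Green's-function argument, a uniform a priori bound by trapping the solution in the invariant region $[0,\max\{\beta^*/m_*,\|V_0\|_{\infty,\mathbb{R}^2}\}]$, and global continuation from the $L^\infty$ bound. The only difference is one of detail: where the paper simply cites invariant-rectangle theory (Smoller) and Amann's continuation result, you carry out the sub/supersolution comparison explicitly and correctly address the unbounded-domain issue via a Phragm\'en--Lindel\"of maximum principle for bounded solutions.
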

\begin{proof}
Local well-posedness can be obtained using a Green's function argument.  The uniform a priori bound and the non-negativity follow from the fact that  $(0, \beta^*/m_*)$ is an invariant rectangle \cite{Smoller1984shock}. The presence of the uniform a priori bound guarantees a global solution \cite{Amann1986quasi}.     
\end{proof}

 Throughout this paper, we suppose that $\Omega$ is a bounded domain in $\mathbb{R}^2$ with smooth boundary $\partial\Omega$ such that $\Omega$ lies locally on one side of $\partial\Omega$. We define an operator $T: H^1(\Omega)\rightarrow H^1(\Omega)$ by 
$$
T(u):=\triangledown\cdot D\triangledown u+\beta u, \ u\in H^2(\Omega)\cap H_0^1(\Omega),
$$
It is well-known that the principal eigenvalue of $T$ has the following variational characterization:
$$
\max_{u\in H_0^1(\Omega), \|u\|_{2, \Omega}=1} \int_{\Omega} (-D(x)|\triangledown u(x)|^2 +\beta(x)u^2(x))dx.
$$

We can prove the following:
\begin{theorem}\label{theorem_uniform}
Suppose that (A1)-(A2) and (A5)-(A6) hold, $\overrightarrow{C}=0$, and $V_0$ is nontrivial. If the principal eigenvalue of $T$ is positive, then for any $\Omega'\subset\subset\Omega$, there exist $\epsilon_0>0$ independent on $V_0$ such that the solution $V(x, t)$ of  \eqref{vector}  satisfies $V(x,  t)>\epsilon_0$ for all $x\in \bar\Omega'$ and $t>t_0$ for some $t_0>0$.
\end{theorem}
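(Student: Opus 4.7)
The plan is to construct a stationary subsolution of the logistic equation on $\Omega$ from the positive principal Dirichlet eigenfunction and then to drive every positive orbit of \eqref{vector} (viewed on $\Omega$) upward to the unique positive Dirichlet steady state. Because $V\ge 0$ on all of $\mathbb{R}^2$ by Theorem~\ref{theorem_vector}, $V\ge 0$ on $\partial\Omega$ in particular, so $V$ is a legitimate supersolution of the Dirichlet problem on $\Omega$ and can be compared against subsolutions that vanish on $\partial\Omega$.

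Let $\lambda_1>0$ be the principal eigenvalue of $T$ and $\phi\in H^2(\Omega)\cap H_0^1(\Omega)$ a corresponding eigenfunction, normalized so that $\phi>0$ in $\Omega$ and $\|\phi\|_{\infty,\Omega}=1$. For $\epsilon^*\in(0,\lambda_1/m^*)$, set $\underline V(x):=\epsilon^*\phi(x)$. A direct computation gives
\[
-\triangledown\!\cdot\! D\triangledown\underline V-\beta\underline V+m\underline V^{\,2}=-\epsilon^*\lambda_1\phi+m\epsilon^{*2}\phi^2\le\epsilon^*\phi\bigl(m^*\epsilon^*-\lambda_1\bigr)<0
\]
in $\Omega$, with $\underline V=0$ on $\partial\Omega$. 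Thus $\underline V$ is a strict time-independent subsolution of \eqref{vector} on $\Omega$ with zero Dirichlet boundary data. Monotone iteration between $\underline V$ and the constant supersolution $\beta^*/m_*$ produces a positive steady state $u^*$ of the Dirichlet problem on $\Omega$ with $u^*\ge\epsilon^*\phi>0$ in $\Omega$; standard arguments show that $u^*$ is the unique positive Dirichlet steady state when $\lambda_1>0$, and it depends only on $D,\beta,m,\Omega$, not on $V_0$.

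The strong parabolic maximum principle, applied to $V_t-\triangledown\!\cdot\! D\triangledown V=V(\beta-mV)$ with $V_0\ge 0$ nontrivial, yields $V(x,t)>0$ everywhere for each $t>0$. Fix $t_1>0$; by compactness of $\bar\Omega$ there exists $\eta\in(0,\epsilon^*]$ (depending on $V_0$) with $\eta\phi\le V(\cdot,t_1)$ on $\bar\Omega$. Let $w$ solve the logistic Dirichlet IBVP on $\Omega$ with initial datum $\eta\phi$ and boundary value $0$. Since $w=0\le V$ on $\partial\Omega$ for all $t$ and $w(\cdot,0)\le V(\cdot,t_1)$, parabolic comparison on the bounded domain $\Omega$ gives $V(x,t)\ge w(x,t-t_1)$ for $(x,t)\in\bar\Omega\times[t_1,\infty)$. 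Because $\eta\phi$ is itself a subsolution, $t\mapsto w(\cdot,t)$ is nondecreasing, uniformly bounded above by $\beta^*/m_*$, and hence $w(\cdot,t)\nearrow u^*$ uniformly on compact subsets of $\Omega$. Setting $\epsilon_0:=\tfrac12\min_{\bar\Omega'}u^*>0$, there exists $t_0\ge t_1$ such that $w(x,t-t_1)\ge\epsilon_0$ on $\bar\Omega'$ for $t\ge t_0$, which yields $V(x,t)\ge\epsilon_0$ on $\bar\Omega'$ for $t\ge t_0$, as claimed.

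I expect the main obstacle to be the requirement that $\epsilon_0$ is genuinely independent of $V_0$. A naive one-shot comparison with the stationary subsolution $\eta\phi$ only produces a lower bound proportional to $\eta$, which shrinks with $\|V_0\|_\infty$. The attractor $u^*$ of the Dirichlet semiflow is the device that absorbs all $V_0$-dependence into the waiting time $t_0$ alone; verifying $w(\cdot,t)\to u^*$ relies on the monotone-subsolution property together with the uniform upper bound supplied by Theorem~\ref{theorem_vector} and the fact that the positive Dirichlet steady state on $\Omega$ is unique when $\lambda_1>0$.
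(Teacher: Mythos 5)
Your proposal is correct and follows essentially the same route as the paper: since $V\ge 0$ on $\partial\Omega$, treat $V$ as a supersolution of the Dirichlet logistic problem on $\Omega$, compare it from below with a solution of that problem converging to the unique positive Dirichlet steady state $u$, and take $\epsilon_0=\tfrac12\min_{\bar\Omega'}u$, so that all dependence on $V_0$ is absorbed into the waiting time $t_0$. The only difference is that the paper simply cites the known global convergence of the Dirichlet logistic semiflow \cite{Cantrell2004} for an arbitrary nonnegative nontrivial comparison datum, whereas you obtain the same convergence in a self-contained way by launching the comparison orbit from the eigenfunction subsolution $\eta\phi$ and using monotonicity together with uniqueness of the positive steady state.
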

\begin{proof}
Since the principal eigenvalue of $T$ is positive, there is a unique positive solution $u$ of 
\begin{equation}
\left\{
\begin{array}{lll}
\triangledown\cdot D\triangledown u+\beta u-mu^2=0, \ \ &x\in \Omega,\\
u=0, \ \  &x\in\partial \Omega. 
\end{array}
\right.
\end{equation}
For any $v_0\in C_0(\bar\Omega)$ that is  nonnegative and nontrivial, let $v(x, t)$ be the solution of the problem 
\begin{equation}\label{comp1}
\left\{
\begin{array}{lll}
v_t=\triangledown\cdot D\triangledown v+\beta v-mv^2, \ \ &x\in \Omega, t>0,\\
v(x, t)=0, \ \  &x\in\partial \Omega, t>0,\\
v(x, 0)=v_0, \ \ &x\in \Omega.
\end{array}
\right.
\end{equation}
Then, we have (see, e.g. \cite{Cantrell2004})
\begin{equation}\label{lim1}
\lim_{t\rightarrow\infty} \|v(\cdot, t)- u\|_{\infty, \Omega}=0.
\end{equation}

Suppose that $V_0$ is nontrivial. Then by the maximum principle, $V(x, t)>0$ for all $(x,  t)\in Q(0, \infty)$. So without loss of generality, we may assume that $V_0(x)>0$ for all $x\in \Omega$. Let $v_0\in C_0(\bar\Omega)$ such that $0<v_0(x)<V_0(x)$ for all $x\in\Omega$. Then $V(x, t)$ is a super solution of \eqref{comp1}. By the comparison principle, we have $V(x, t)>v(x, t)$ for all $x\in\Omega$ and $t>0$. Let $a=\min_{x\in \overline{\Omega'}}  u(x)$. By \eqref{lim1}, there exists $t_0>0$ such that 
$$
V(x, t)>v(x,  t)>\frac{a}{2}:=\epsilon_0, \ \text{ for all } \ x\in \overline{\Omega'} \ \text{ and } \ t>t_0.
$$
\end{proof}

For $r>0$, let $B_r$ be the closed disk in $\mathbb{R}^2$ of radius $r$ centered at the origin.
\begin{corollary}\label{corollary_uniform}
Suppose that (A1)-(A2) and (A5)-(A6) hold and $\overrightarrow{C}=0$. If
\begin{equation}\label{beta1}
\int_{\mathbb{R}^2} \beta(x)dx>\frac{\pi^2 D_{M}}{2},
\end{equation}
then for any $r>0$, there exist $\epsilon_0>0$ independent on $V_0$ and $t_0>0$ such that the solution $V(x, t)$ of  \eqref{vector} satisfies $V(x, t)>\epsilon_0$ for all $x\in B_r$ and $t>t_0$.
\end{corollary}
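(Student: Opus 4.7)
The plan is to reduce this to Theorem~\ref{theorem_uniform} applied on $\Omega = B_R$ for a sufficiently large $R > r$, with $\Omega' = B_r$. Since all of (A1)--(A2), (A5)--(A6) and the hypothesis $\overrightarrow{C}=0$ are already in force, the only thing to check is that the principal eigenvalue of the operator $T$ on such a disk is positive. By the variational characterization stated just before Theorem~\ref{theorem_uniform}, this is equivalent to exhibiting a single test function $u \in H_0^1(B_R)\setminus\{0\}$ for which
$$
\int_{B_R} \bigl( -D(x)|\nabla u|^2 + \beta(x) u^2 \bigr)\,dx \;>\; 0,
$$
where we will bound the gradient term above by $D_M \int |\nabla u|^2$.

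The essential step is to pick a test function whose Dirichlet energy matches the constant $\pi^2/2$ appearing in the hypothesis. The natural choice is a product cosine on a square. Given $L>0$ with $L\sqrt{2} < R$, set $Q_L := (-L, L)^2 \subset B_R$ and define
$$
u_L(x_1, x_2) \;=\; \cos\!\Bigl(\frac{\pi x_1}{2L}\Bigr)\cos\!\Bigl(\frac{\pi x_2}{2L}\Bigr) \text{ on } Q_L,
$$
extended by zero to $B_R \setminus Q_L$. Since $u_L$ vanishes continuously on $\partial Q_L$ and has compact support in $B_R$, it lies in $H_0^1(B_R)$. A direct calculation based on $\int_{-L}^{L}\sin^2(\pi s/(2L))\,ds = \int_{-L}^{L}\cos^2(\pi s/(2L))\,ds = L$ yields the scale-invariant identity
$$
\int_{B_R} |\nabla u_L|^2 \,dx \;=\; \frac{\pi^2}{4L^2}\bigl( L\cdot L + L\cdot L \bigr) \;=\; \frac{\pi^2}{2},
$$
independent of $L$.

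Next, $0 \le u_L \le 1$ everywhere and, for each fixed $x \in \mathbb{R}^2$, $u_L(x) \nearrow 1$ as $L \to \infty$ (once $L > \max(|x_1|,|x_2|)$ the cosines are evaluated at arguments decreasing toward zero). By monotone convergence,
$$
\int_{B_R} \beta(x) u_L^2\,dx \;\longrightarrow\; \int_{\mathbb{R}^2} \beta(x)\,dx \;>\; \frac{\pi^2 D_M}{2}\qquad(L \to \infty),
$$
where the last inequality is the hypothesis \eqref{beta1}. Hence for all sufficiently large $L$ (and a corresponding $R > L\sqrt{2}$ with $R > r$),
$$
\int_{B_R}\bigl(-D|\nabla u_L|^2 + \beta u_L^2\bigr)\,dx \;\ge\; \int_{B_R}\beta u_L^2\,dx - D_M\cdot\frac{\pi^2}{2} \;>\; 0,
$$
so the principal eigenvalue of $T$ on $B_R$ is positive. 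With $\Omega = B_R$ and $\Omega' = B_r \subset\subset B_R$, Theorem~\ref{theorem_uniform} now supplies $\epsilon_0 > 0$ (independent of $V_0$) and $t_0 > 0$ with $V(x,t) > \epsilon_0$ on $\bar B_r$ for $t > t_0$.

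The only nontrivial obstacle is identifying the test function whose Dirichlet energy exactly equals $\pi^2/2$ regardless of scale; radial cutoffs on $B_R$ give strictly larger constants, whereas the separated product of cosines on a square is perfectly adapted to the constant appearing in \eqref{beta1}.
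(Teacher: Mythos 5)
Your proposal is correct and follows essentially the same route as the paper: the paper's test function $\frac{1}{L}\sin\bigl(\frac{\pi(x_i+L)}{2L}\bigr)$ is exactly your $\frac{1}{L}\cos\bigl(\frac{\pi x_i}{2L}\bigr)$, so both arguments verify positivity of the principal eigenvalue via the same product-cosine test function with scale-invariant Dirichlet energy $\pi^2/2$ and then invoke Theorem~\ref{theorem_uniform}. The only differences are cosmetic: you take $\Omega=B_R$ and extend by zero while the paper works directly on the square $X_L$, and you pass to the limit by monotone convergence where the paper uses an explicit $\epsilon$--$\delta$ estimate on the square $X_K$.
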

\begin{proof}
For each $a>0$, let $X_a=(-a, a)\times (-a, a)$. Let $\epsilon\in (0, 1)$ be given. By the assumption \eqref{beta1}, there exits $K>0$ such that 
$$
(1-\epsilon)\int_{X_K} \beta(x)dx>\frac{\pi^2 D_{M}}{2},
$$
and $B_r$ is a  subset of $X_K$. In addition, there exists $\delta=\delta(\epsilon)\in (0, 1)$ such that $1-\epsilon\le \sin^2(w)\le 1$ whenever $|w-\pi/2|\le \delta$. Let $L=\frac{\pi}{2\delta}K$. Then $L>K$ and $B_r$ is a subset of $X_L$. Moreover, one can check that $1-\epsilon \le \sin^2(\frac{\pi (x+L)}{2L})\le 1$ whenever $|x|\le K$.

Define 
$$
\phi(x)=\frac{1}{L} \sin(\frac{\pi(x_1+L)}{2L}) \sin(\frac{\pi(x_2+L)}{2L}) \ \text{ for } x=(x_1, x_2)\in X_L. 
$$
Then $\phi\in H^1_0(X_L)$, $\|\phi\|_{2, X_L}=1$ and $\|\triangledown \phi\|_{2, X_L} =\frac{\pi}{\sqrt{2}L}$. It then follows that 
\begin{eqnarray*}
&&\max_{u\in H_0^1(X_L), \|u\|_{2, X_L}=1} \int_{X_L} (-D(x)|\triangledown u(x)|^2 +\beta(x)u^2(x))dx\\
&&\hspace{3cm} \ge \int_{X_L} (-d_{max}|\triangledown \phi(x)|^2 + \beta(x)\phi^2(x))dx\\
&&\hspace{3cm} \ge -D_M\frac{\pi^2}{2L^2}+\int_{X_K} \beta(x)\phi^2(x)dx  \\
&&\hspace{3cm} \ge -D_M\frac{\pi^2}{2L^2}+\frac{1-\epsilon}{L^2} \int_{X_K} \beta(x)dx>0.
\end{eqnarray*}
Consequently, the principal eigenvalue of $T$ is positive, and the claim follows from Theorem \ref{theorem_uniform}.
\end{proof}

%
%

\subsection{Vector host transmission}
We assume that the host population is distributed between two distinct bounded sub-regions   $\Omega_1$ and $\Omega_2$ of $\mathbb{R}^2$ that are in sufficiently close proximity to allow natural vector diffusion without the presence of the wind to drive the vector borne transmission of the pathogen from one field to another.  Both  $\Omega_1$ and $\Omega_2$ are assumed to have smooth boundaries $\partial\Omega_1$ and $\partial\Omega_2$ and to lie locally on one side of their boundaries.  The sub-regions are non-overlapping and separated:
$$
\Omega_1\cap\Omega_2 =\emptyset \ \ \text{ and } \ \ dist(\Omega_1, \Omega_2)>0.
$$

We let $H_1$ and $H_2$ denote the host populations which occupy $\Omega_1$  and $\Omega_2$ respectively, and assume that  $H_1$ remains confined to $\Omega_1$ and $H_2$ remains confined to $\Omega_2$.  We model the circulation in each of the subregions by an SEIR model. The susceptible class, $S_j$ for $j=1, 2$, consists of individuals who are free of the pathogen. The exposed class, $E_j$ for $j=1, 2$, consists of individuals who have been infected with the pathogen. However at this stage the disease is incubating and these individuals are not capable of transmitting the pathogen. The infected/infectious class, $I_j$ for $j=1, 2$, consists of individuals who are capable of transmitting the disease. The removed class, $R_j$, for $j=1, 2$, consists of individuals who have either perished from the disease or have recovered, and  thereby gained immunity. The variables $S_j(x, t)$, $E_j(x, t)$, $I_j(x, t)$, $R_j(x, t)$ for $j=1, 2$ represent the time dependent spatial densities in each of the subregions $\Omega_j$. The total population of each class in each subregion is given by integration over the subregion. 


Susceptible hosts in each subregion are infected via contact with infected vectors. We model this by mass action force infection terms: $\sigma_1(x)S_1(x, t)V_i(x, t)$, $x\in\Omega_1, t>0$,  and $\sigma_2(x)S_2(x, t)V_i(x, t)$, $x\in\Omega_2, t>0$. We assume that exposed hosts in either subregion become fully infected at constant rate  $\lambda>0$, and that removal by death or recovery in either subregion occurs at a rate $\delta>0$.

The host population of each subregion remains confined to that subregion. The dispersion through each subregion is modeled by diffusion with the diffusivities of the susceptible and exposed hosts in subregions  $\Omega_1$ and $\Omega_2$ given by $D_{11}(x)$ and $D_{12}(x)$.  The dispersion of infected/infective hosts in $\Omega_1$  and $\Omega_2$ is modeled by $D_{21}(x)$ and $D_{22}(x)$.

Infected vectors can be recruited by means of contact with infected hosts in either of the two sub-regions.  This process is modeled by the incidence term:
\begin{equation*}
f(x, t, I_{1}, I_{2}, V_s)=
\left\{
\begin{array}{lrl}
\alpha_{1}(x) I_{1}V_s, \ \ &x\in\bar\Omega_1, t>0  \\
\alpha_{2}(x) I_{2}V_s, \ \ &x\in\bar\Omega_2, t>0 \\
0, \ \ \ &x\in\mathbb{R}^2/\bar\Omega_1\cup \bar\Omega_2, t>0. 
\end{array}
\right.
\end{equation*}
We assume that the presence of the pathogen has no deleterious effect on the vectors.

The following equations model the vector-host populations:
\begin{itemize}
\item \textbf{Vector Populations}
\begin{equation}\label{main1}
\left\{
\begin{array}{lll}
\frac{\partial}{\partial t} V_s =  \triangledown\cdot D(x)\triangledown V_s-\overrightarrow{C}(x, t)\cdot \triangledown  V_s+\beta(x) 
V-m(x) V_s V- f(x, t, I_1, I_2, V_s), \\
\hspace{4.5in} (x, t)\in Q(0, \infty), \\ 
\frac{\partial}{\partial t} V_i =  \triangledown\cdot D(x)\triangledown V_i-\overrightarrow{C}(x, t)\cdot \triangledown  V_i-m(x) V_i V
+ f(x, t, I_1, I_2, V_s),  \\
\hspace{4.5in} (x, t)\in Q(0, \infty),  \\
V_s(x, 0)=V_{s0}(x), \ \ \ V_i(x, 0)=V_{i0}(x), \ \ \ x\in\mathbb{R}^2.
\end{array}
\right.
\end{equation}

\item \textbf{Host Populations}

\begin{equation}\label{main2}
\left\{
\begin{array}{lll}
\frac{\partial}{\partial t} S_{j} =  \triangledown\cdot D_{1j}(x)\triangledown S_{j}-\sigma_j(x) S_jV_i, \ \ \ (x, t)\in \Omega_j\times (0, \infty), \, j=1,2, \medskip \\ 
\frac{\partial}{\partial t} E_{j} = \triangledown\cdot D_{1j}(x)\triangledown E_{j}+\sigma_j(x) S_jV_i-\lambda E_j , \ \ \ (x, t)\in \Omega_j\times (0, \infty),  \, j=1,2, \medskip \\ 
\frac{\partial}{\partial t} I_{j} = \triangledown\cdot D_{2j}(x)\triangledown I_{j}+\lambda  E_j-\delta I_j , \ \ \ (x, t)\in \Omega_j\times (0, \infty), \, j=1,2, \medskip \\ 
\frac{\partial }{\partial n} S_{j}=\frac{\partial }{\partial n} E_{j}=\frac{\partial }{\partial n} I_{j}=0, \ \ \ (x, t)\in \partial\Omega_j\times (0, \infty), \, j=1,2, \medskip \\ 
S_{j}(x, 0)=S_{j0}(x), \ \ \ E_{j}(x, 0)=E_{j0}(x),\ \ \ I_{j}(x, 0)=I_{j0}(x), \ \ \ \ x\in\Omega_j, \, j=1,2.
\end{array}
\right.
\end{equation}
\end{itemize}
The removed classes have no effect on the progress of the disease and do not appear in our system of equations. The homogeneous Neumann boundary conditions in \eqref{main2} guarantee that the hosts remain confined to their habitats. 

We further impose the following assumptions:
\begin{itemize}
\item[A8.]   $D_{11}, D_{21}\in C_b^2(\bar\Omega_1)$, $D_{12}, D_{22}\in C^2_b(\bar\Omega_2)$, and there exists positive constant $D_*$ such that $D_{11}, D_{21}, D_{12}, D_{22}\ge D_*$;
\item[A9.] $\lambda, \delta>0$;
\item[A10.] $\sigma_j, \alpha_j\in C^1_b(\bar\Omega_j)$ and there exist $\sigma_*, \alpha_*$ such that $\sigma_j(x)\ge \sigma_*$ and  $\alpha_j(x)\ge \alpha_*$ for all $x\in \bar\Omega_j$, $j=1, 2$; 
\item[A11.] $S_{j0}, E_{j0}, I_{j0}\in C(\bar\Omega_j)$, and $S_{j0}(x), E_{j0}(x), I_{j0}(x)\ge 0$ for all $x\in\bar\Omega_j$, $j=1, 2$;
\item[A12.]   $V_{s0}, V_{i0}\in  C_b(\mathbb{R}^2)\cap L^1(\mathbb{R}^2)$, and $V_{s0}(x), V_{i0}(x)\ge 0$ for all $x\in \mathbb{R}^2$.
\end{itemize}

We remark that the discontinuity produced by the left hand side of the equations for $V_s$  and $V_i$  precludes a classical  global existence theorem. 
\begin{defn}
We say $(V_s(x, t), V_i(x, t))$, $(x, t)\in Q(0, \infty)$, and $(S_{j}(x, t), E_{j}(x, t), I_{j}(x, t))$, $(x, t)\in\Omega_j\times (0, \infty)$, $j=1, 2$, are strong global solutions of \eqref{main1}-\eqref{main2}, if
\begin{itemize}
\item $S_{j}, E_{j}, I_{j}\in C^{2, 1}(\bar\Omega_j\times (0, \infty))$, $j=1, 2$;
\item $V_s, V_i\in C((0, \infty); C_b(\mathbb{R}^2 )\cap L^1(\mathbb{R}^2))$;
\item For each $p>1$, $V_s, V_i\in C((0, \infty); W^{2, p}(\mathbb{R}^2))$; 
\item The partial differential equations and initial conditions are a.e. satisfied.
\end{itemize}
\end{defn}

Our well-posedness result for the vector-host model is as follows:
\begin{theorem}\label{existence}
Assume that (A1)-(A6) and (A8)-(A12) hold. Then there is a unique nonnegative global bounded strong solution of \eqref{main1}-\eqref{main2}.
\end{theorem}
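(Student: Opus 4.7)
My plan is a standard three-step scheme: local existence by Banach fixed point, uniform-in-time a priori bounds, and then continuation together with uniqueness. The principal novelty relative to a purely classical parabolic system is the jump discontinuity of $f$ across $\partial\Omega_1\cup\partial\Omega_2$, which is exactly why the definition above asks for strong rather than classical solutions on the vector side. For local existence on a small interval $[0,T]$ I would set up a contraction on a closed ball in
\[
X_T := C([0,T]; C_b(\mathbb{R}^2)\cap L^1(\mathbb{R}^2))^2 \times \prod_{j=1}^2 C([0,T]; C(\bar\Omega_j))^3,
\]
representing $V_s, V_i$ by the variation-of-constants formula against the Green's function of $\partial_t-\triangledown\cdot D\triangledown+\overrightarrow{C}\cdot\triangledown$ on $\mathbb{R}^2$ (whose Gaussian bounds follow from (A1)--(A4)), and $S_j, E_j, I_j$ by the Neumann heat semigroup on each $\Omega_j$. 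Given an iterate, the map-defining source $f$ is discontinuous across $\partial\Omega_j$ but compactly supported and bounded, hence belongs to $L^\infty(\mathbb{R}^2\times(0,T))$; $L^p$-parabolic regularity with $p$ large then lifts $V_s, V_i$ into $W^{2,p}(\mathbb{R}^2)$, and the contraction is closed by shrinking $T$. Nonnegativity of every component is preserved by applying the weak maximum principle to each equation in isolation.

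The decisive a priori bound is obtained by summing the $V_s$ and $V_i$ equations in \eqref{main1}: the $\pm f$ contributions cancel and $-m V_s V-m V_i V=-m V^2$, so $V:=V_s+V_i$ satisfies exactly \eqref{vector} with initial datum $V_{s0}+V_{i0}$. Theorem \ref{theorem_vector} then yields $0\le V_s, V_i\le V\le M_V:=\max\{\beta^*/m_*,\|V_{s0}+V_{i0}\|_\infty\}$. With $V_i$ uniformly bounded, $S_j$ satisfies a linear parabolic equation with nonnegative potential $\sigma_j V_i$, hence $0\le S_j\le\|S_{j0}\|_\infty$; the source $\sigma_j S_j V_i$ is then uniformly bounded, which bounds $E_j$ by ODE comparison against $E_j'=-\lambda E_j+\textrm{const}$, and the same mechanism bounds $I_j$. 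These uniform bounds permit iteration of Step~1 on $[T,2T],[2T,3T],\ldots$ to yield a global solution; uniqueness follows by differencing two solutions and applying Gronwall's inequality within the invariant region produced above, using the local Lipschitz property of all nonlinearities there.

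The main obstacle I anticipate is controlling $V_s, V_i$ at spatial infinity well enough to close the fixed point in the $C_b\cap L^1$ topology. Propagation of $L^1(\mathbb{R}^2)$-integrability under the advection-diffusion semigroup requires the divergence-free condition (A4) together with the observation that $f$, though discontinuous, has spatial support contained in $\bar\Omega_1\cup\bar\Omega_2$ and thus contributes only a bounded-in-time mass exchange; this is what justifies the $L^1$ membership claimed in the strong-solution definition. The $C^{2,1}$ regularity of the host components and the $W^{2,p}$ regularity of the vector components are then routine consequences of standard parabolic theory once the uniform bounds of Step~2 are in hand.
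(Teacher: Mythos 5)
Your proposal is correct and follows essentially the same route as the paper's proof: local existence via a Green's function/variation-of-parameters argument, nonnegativity from maximum-principle/invariant-rectangle reasoning, the key observation that $V=V_s+V_i$ solves \eqref{vector} so Theorem \ref{theorem_vector} bounds the vector components, comparison bounds for $S_j$, $E_j$, $I_j$, and continuation to a global solution. Your added attention to propagating the $L^1(\mathbb{R}^2)$ control using (A4) and the compact support of $f$ is a reasonable elaboration of details the paper leaves implicit, but it is not a different method.
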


\begin{proof}
We can adapt the Green's function/variation of parameters method to establish the local well-posedness on a maximal time interval $[0, T_{max})$ with $T_{max}=\infty$ provided the supreme norm does not blow up in finite time. 

Since we have assumed that all initial conditions are non-negative, we can adapt standard invariant rectangle arguments \cite{Smoller1984shock} to observe that all solution components remain non-negative.  By Theorem \ref{theorem_vector} and $V(x, t)=V_s(x, t)+V_i(x, t)$, we have 
$$
0\le V_s(x, t), V_i(x, t)\le M,
$$
where $M$ is some positive constant depending on the initial data $V_{s0}, V_{i0}$.  By the equations of \eqref{main2} and the comparison principle, we have 
$$
0\le S_j(x, t) \le \|S_{j0}\|_{\infty, \Omega_j} \text{ for all }  (x, t)\in \bar\Omega_j\times [0, T_{max}), \  j=1, 2.
$$
Then by the equations of \eqref{main2}, we have 
$$
\frac{\partial}{\partial t} E_{j} \le \triangledown\cdot D_{1j}(x)\triangledown E_{j}+M_1-\lambda E_j,
$$
where $M_1$ is some positive constant depending on $V_{s0}, V_{i0}$, $S_{10}$ and $S_{20}$. Hence $E_j$ is a lower solution of the problem 
\begin{equation*}
\left\{
\begin{array}{lll}
\frac{\partial}{\partial t} \bar E_{j} = \triangledown\cdot D_{12}(x)\triangledown \bar E_{j}+M_1-\lambda \bar E_j , \ \ \ &(x, t)\in \Omega_j\times (0, T_{max}), \medskip \\ 
\frac{\partial }{\partial n} \bar E_{j}=0, \ \ \ &(x, t)\in \partial\Omega_j\times (0, T_{max}), \medskip \\ 
\bar E_{j}(x, 0)=E_{j0}, \ \ \ &x\in\Omega_j.
\end{array}
\right.
\end{equation*}
By the comparison principle, then we have 
$$
0\le E_j(x, t)\le \bar E_j(x, t) \le \max\{ M_1/\lambda,  \|E_{j0}\|_{\infty, \Omega_j}\},  \ \ (x, t)\in \Omega_j\times (0, T_{max}), j=1, 2.
$$
Similarly, by the equations of \eqref{main2}, we can obtain similar bounds for $I_j$, $j=1, 2$. Therefore, $T_{max}=\infty$, and we have global boundedness of the solution. 
\end{proof}

\begin{theorem}\label{theorem_asymptotic}
Assume that (A1)-(A6) and (A8)-(A12) hold.  Then there exist nonnegative constants $S^*_{1}$ and $S^*_{2}$ such that 
\begin{eqnarray}
&&\lim_{t\rightarrow\infty} \|S_{j}(\cdot, t)- S^*_{j}\|_{\infty, \Omega_j}=0, \ j=1, 2 \\
&&\lim_{t\rightarrow\infty} \|E_{j}(\cdot, t)\|_{\infty, \Omega_j}=0, \ j=1, 2 \\
&&\lim_{t\rightarrow\infty} \|I_{j}(\cdot, t)\|_{\infty, \Omega_j}=0, \ j=1, 2\\
&&\lim_{t\rightarrow\infty} \|V_{i}(\cdot, t)\|_{\infty, \mathbb{R}^2}=0.
\end{eqnarray} 
Furthermore, if $\overrightarrow{C}=0$, $V_{s0}+V_{i0}$ and $S_{j0}$ are nontrivial, and  
\begin{equation}\label{beta111}
\int_{\mathbb{R}^2} \beta(x)dx>\frac{\pi^2 D_{M}}{2},
\end{equation}
then $S^*_j>0$, $j=1, 2$.
\end{theorem}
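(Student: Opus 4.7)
The approach has three steps: extract $L^1$ space--time integrability from natural mass balances; promote it to uniform $L^\infty$ decay via parabolic regularity; and handle $S_j$ with a Lyapunov--Gronwall argument. Integrating the $S_j, E_j, I_j$ equations over $\Omega_j$ and using the Neumann boundary conditions gives the chain
\[
\tfrac{d}{dt}\!\int_{\Omega_j}\!S_j = -\!\int_{\Omega_j}\!\sigma_j S_j V_i, \quad
\tfrac{d}{dt}\!\int_{\Omega_j}\!(S_j+E_j) = -\lambda\!\int_{\Omega_j}\!E_j, \quad
\tfrac{d}{dt}\!\int_{\Omega_j}\!(S_j+E_j+I_j) = -\delta\!\int_{\Omega_j}\!I_j,
\]
rendering the three total ``masses'' monotone nonincreasing by nonnegativity of the components (Theorem~\ref{existence}). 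This yields finiteness of each of $\int_0^\infty\!\!\int_{\Omega_j}\sigma_j S_j V_i$, $\int_0^\infty\!\!\int_{\Omega_j}E_j$, and $\int_0^\infty\!\!\int_{\Omega_j}I_j$. Since $f\le\alpha^*\|V_s\|_{\infty,\mathbb R^2}(I_1+I_2)$, the source for $V_i$ is also integrable in space--time, and integrating the $V_i$ equation on $\mathbb R^2$ (using $\nabla\!\cdot\vec C=0$ from (A4) together with decay at infinity inherited from $V_{i0}\in L^1(\mathbb R^2)$) yields $\sup_t\|V_i(\cdot,t)\|_{1,\mathbb R^2}<\infty$.

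Next, standard parabolic $L^p$/Schauder interior estimates applied on the time-shifted cylinders $\bar\Omega_j\times[T,T+2]$ and $K\times[T,T+2]$ (for compact $K\subset\mathbb R^2$) convert the uniform $L^\infty$ bounds of Theorem~\ref{existence} into uniform H\"older equicontinuity of all components, independent of $T\ge 1$. A Barbalat-style contradiction argument---if $\|I_j(\cdot,t_n)\|_{\infty,\Omega_j}\ge\eta>0$ at $t_n\to\infty$, equicontinuity produces a parabolic cube on which $I_j\ge\eta/2$, contradicting finiteness of the $L^1$ tail---then gives $\|E_j(\cdot,t)\|_{\infty,\Omega_j}\to 0$ and $\|I_j(\cdot,t)\|_{\infty,\Omega_j}\to 0$, and consequently $\|f(\cdot,t)\|_{\infty,\mathbb R^2}\to 0$. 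For $V_i$ I write Duhamel's formula against the advection--diffusion generator $Lu:=\nabla\!\cdot\! D\nabla u-\vec C\!\cdot\!\nabla u$ on $\mathbb R^2$ and combine the $L^1$ bound on $V_i$, the nonnegative sink $-mVV_i\le 0$, the uniform vanishing of $f$, and the 2D Aronson--Gaussian estimate $\|e^{tL}\|_{L^1\to L^\infty}\lesssim t^{-1}$ to conclude $\|V_i(\cdot,t)\|_{\infty,\mathbb R^2}\to 0$. Finally, $\bar S_j(t):=|\Omega_j|^{-1}\!\int_{\Omega_j}S_j$ is monotone decreasing, so $\bar S_j\searrow S_j^*\ge 0$; testing the $S_j$-equation by $S_j-\bar S_j$ together with the Poincar\'e inequality on $\Omega_j$ produces $\tfrac12\tfrac{d}{dt}\|S_j-\bar S_j\|_{2,\Omega_j}^2+c\|S_j-\bar S_j\|_{2,\Omega_j}^2\le C\|V_i(\cdot,t)\|_{\infty,\mathbb R^2}$, whence $\|S_j-\bar S_j\|_{2,\Omega_j}\to 0$ by Gronwall, upgraded to $\|S_j(\cdot,t)-S_j^*\|_{\infty,\Omega_j}\to 0$ by the H\"older equicontinuity.

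Under the extra hypotheses ($\vec C=0$ and \eqref{beta111}), Corollary~\ref{corollary_uniform} furnishes $\epsilon_0,t_0>0$ with $V(x,t)\ge\epsilon_0$ on any prescribed ball $B_r\supset\bar\Omega_1\cup\bar\Omega_2$ for $t\ge t_0$, so $mV\ge m_*\epsilon_0$ on $B_r$; combined with $\|f(\cdot,t)\|_\infty\to 0$ this forces exponential $L^\infty$ decay of $V_i$ on $B_r$ and, via semigroup propagation, on all of $\mathbb R^2$, giving $\int_0^\infty\|V_i(\cdot,s)\|_{\infty,\mathbb R^2}\,ds<\infty$. The differential inequality $\bar S_j'(t)\ge-\sigma^*\|V_i(\cdot,t)\|_{\infty,\mathbb R^2}\bar S_j(t)$ and Gronwall then yield $S_j^*\ge\bar S_j(0)\exp\bigl(-\sigma^*\!\int_0^\infty\|V_i(\cdot,s)\|_{\infty,\mathbb R^2}\,ds\bigr)>0$, where $\bar S_j(0)>0$ by nontriviality of $S_{j0}$ combined with the strong maximum principle (which makes $S_j>0$ on $\bar\Omega_j$ after arbitrarily small positive time). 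The main obstacle throughout is the treatment of $V_i$ on the unbounded domain $\mathbb R^2$: no Poincar\'e spectral gap is available, and the sink $-mVV_i$ is not uniformly strong in $x$ absent the Corollary~\ref{corollary_uniform} hypotheses, so the argument must delicately combine the $L^1$ mass control on $V_i$ with the Gaussian smoothing of the advection--diffusion semigroup.
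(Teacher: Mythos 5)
Your proof of the four limits is essentially sound and largely parallels the paper's argument: the same mass-balance identities give the space--time $L^1$ control of $E_j$, $I_j$ and $\sigma_jS_jV_i$; your route to $\|V_i(\cdot,t)\|_{\infty,\mathbb{R}^2}\to 0$ (Duhamel against the advection--diffusion evolution family, dropping the nonnegative sink, $L^1\to L^\infty$ Gaussian smoothing combined with $\|f(\cdot,t)\|_{\infty}\to 0$) is a legitimate implementation of what the paper obtains by adapting Theorem 4.1 of \cite{Fitzgibbon2004reaction}; and your treatment of $S_j$ (testing with $S_j-\bar S_j$, Poincar\'e, and the already-proved decay of $\|V_i\|_\infty$) differs from the paper's two energy identities (multiplying by $S_j$ and by $\partial_tS_j$ to get $\|\triangledown S_j(\cdot,t)\|_{2,\Omega_j}\to 0$) but works.

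The genuine gap is in the final step $S_j^*>0$. You claim that $V\ge\epsilon_0$ on a ball containing $\bar\Omega_1\cup\bar\Omega_2$ together with $\|f(\cdot,t)\|_\infty\to 0$ ``forces exponential $L^\infty$ decay of $V_i$,'' hence $\int_0^\infty\|V_i(\cdot,s)\|_{\infty}\,ds<\infty$. That inference fails: from $\partial_tV_i\le LV_i-m_*\epsilon_0V_i+f$ one only gets $\|V_i(t)\|_\infty\le Ce^{-ct}+\int_0^te^{-c(t-s)}\|f(\cdot,s)\|_\infty\,ds$, and since the only information on $f=\alpha_jI_jV_s$ is that $\|I_j(\cdot,t)\|_\infty\to 0$ at an unknown rate, this yields decay to zero but neither an exponential rate nor time-integrability of $\|V_i\|_\infty$. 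Attempting to close the loop is circular: integrability in time of $\|V_i\|_\infty$ would require integrability of $\|I_j\|_\infty$, which via the $E_j$ equation requires integrability of $\|S_jV_i\|_\infty$ --- exactly what is to be proved; and the available space--time $L^1$ bounds plus uniform H\"older bounds only give $\|I_j(\cdot,t)\|_\infty\le C\bigl(\int_{\Omega_j}I_j\,dx\bigr)^{\theta}$ with $\theta<1$, which does not help. The paper avoids this entirely: from the $V_i$ mass balance it has $\int_0^\infty\int_{\mathbb{R}^2}mVV_i\,dx\,dt<\infty$ (see \eqref{vi}), and the lower bound $V\ge\epsilon_0$ on $\overline{\Omega_1\cup\Omega_2}$ from Corollary \ref{corollary_uniform} then yields only $\int_0^\infty\int_{\Omega_j}V_i\,dx\,dt<\infty$ (see \eqref{vii}); positivity is then extracted not by Gronwall on $\bar S_j$ (which needs $\|V_i\|_\infty$ integrable in time) but by the Lyapunov-type comparison for $U_j=S_j-\ln S_j$, which satisfies $\partial_tU_j\le\triangledown\cdot D_{1j}\triangledown U_j+h_j$ with $h_j=\sigma_jV_i(1-S_j)$ absolutely integrable over $\Omega_j\times(0,\infty)$, so $\int_{\Omega_j}(S_j-\ln S_j)\,dx$ stays bounded and hence $S_j^*-\ln S_j^*<\infty$, i.e.\ $S_j^*>0$. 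To repair your proposal you must either genuinely prove $\int_0^\infty\|V_i(\cdot,t)\|_{\infty,\Omega_j}\,dt<\infty$ (which your current argument does not deliver) or replace the final Gronwall step by an argument, such as the paper's, that consumes only $\int_0^\infty\int_{\Omega_j}V_i\,dx\,dt<\infty$.
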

\begin{proof}
Adding up the equations in \eqref{main2}  (add up the first two equations twice) and integrating them over $\Omega_j$, we obtain
$$
\frac{d}{dt} \int_{\Omega_j} (2S_{j}(x, t)+2E_{j}(x, t)+ I_{j}(x, t)) dx+\lambda \int_{\Omega_j} E_{j}(x, t)dx+\delta \int_{\Omega_j} I_{j}(x, t)dx\le 0, \ \ j=1, 2.
$$
Integrating the equation with respect to time on $[0, t]$, we have
\begin{equation}\label{sib0}
\lambda \int_0^t\int_{\Omega_j} E_{j}(x, s)dxds+\delta\int_0^t\int_{\Omega_j} I_{j}(x, s)dxds\le \int_{\Omega_j} (S_{j0}(x)+E_{j0}(x)+I_{j0}(x)) dx,  \ \ j=1, 2.
\end{equation}
We thereby may conclude that for $\tau>0$,
$$
\lim_{\tau\rightarrow\infty}\int_\tau^{\tau+1}\int_{\Omega_j}  E_{j}(x, s)dxds=0 \ \ \ \text{ and } \ \ \ \lim_{\tau\rightarrow\infty}\int_\tau^{\tau+1}\int_{\Omega_j}  I_{j}(x, s)dxds=0, \ \ j=1, 2.
$$
This together with the uniform a priori bounds on $E_j$ and $I_j$ implies that for $p\ge 1$
\begin{equation}\label{l1}
\lim_{\tau\rightarrow\infty} \|E_j\|_{p, \Omega_j\times (\tau, \tau+1)}= \lim_{\tau\rightarrow\infty} \|I_j\|_{p, \Omega_j\times (\tau, \tau+1)}=0, \ \ j=1, 2. 
\end{equation}
By the equations of \eqref{main2} and the parabolic $L^p$ estimate, there exists $C_p>0$ such that for $\tau>0$
$$
 \|I_j\|^{(2, 1)}_{p, \Omega_j\times (\tau+1, \tau+2)}\le C_p( \|E_j\|_{p, \Omega_j\times (\tau, \tau+2)}+  \|I_j\|_{p, \Omega_j\times (\tau, \tau+2)} ), \ \ j=1, 2.
$$
Consequently, by \eqref{l1} and the Sobolev embedding theorem, we conclude that 
$$
\lim_{t\rightarrow\infty} \|I_j(\cdot, t)\|_{\infty, \Omega_j} =0, \ \ j=1, 2. 
$$

We now turn our attention to $E_j$. Integrating the equation for $E_j$ results in for every $t>0$
$$
\int_0^t \int_{\Omega_j} \sigma_j(x) S_j(x, \tau)V_i(x, \tau) dxd\tau \le \int_{\Omega_j} S_{j0}(x)dx, \ \ j=1, 2.
$$
Therefore the boundedness of $\sigma_j, S_j, V_i$ implies that for every $p>1$,
$$
\int_0^\infty\int_{\Omega_j} (\sigma_j(x) S_j(x, \tau)V_i(x, \tau))^p dxd\tau <\infty \ \ j=1, 2.
$$
As a result, 
\begin{equation}\label{sss}
\lim_{\tau\rightarrow\infty} \|\sigma_jS_jV_i\|_{p, \Omega_j\times (\tau, \tau+1)}=0.
\end{equation}
By the equations of \eqref{main2} and the parabolic $L^p$ estimate, there exist $C_p>0$ such that for $\tau>0$
$$
 \|E_j\|^{(2, 1)}_{p, \Omega_j\times (\tau+1, \tau+2)}\le C_p( \|E_j\|_{p, \Omega_j\times (\tau, \tau+2)}+  \|\sigma_jS_jV_i\|_{p, \Omega_j\times (\tau, \tau+2)} ), \ \ j=1, 2.
$$
Again by \eqref{l1}-\eqref{sss} and the Sobolev embedding theorem, we conclude that 
$$
\lim_{t\rightarrow\infty} \|E_j(\cdot, t)\|_{\infty, \Omega_j} =0, \ \ j=1, 2. 
$$

We now turn our attention to $V_i$. Recall the definition of the incidence function
$$
F(x, t)=: f(x, t, I_1(x, t), I_2(x, t), V_s(x, t)).
$$
By the boundedness of the solution and \eqref{sib0}, we observe that 
$$
\int_0^\infty \int_{\mathbb{R}^2} F(x, t) dx dt<\infty. 
$$
Using (A4), we integrate the equation for $V_i$ and observe that 
\begin{equation*}
\int_{\mathbb{R}^2} V_i(x, t)dx+ \int_0^t \int_{\mathbb{R}^2} m(x)V(x, \tau)V_i(x, \tau)dx dt=\int_{\mathbb{R}^2} V_{i0}(x)dx+\int_0^t\int_{\mathbb{R}^2} F(x, t) dx dt.
\end{equation*}
Therefore,
\begin{equation}\label{vi}
\int_0^\infty \int_{\mathbb{R}^2} m(x)V(x, \tau)V_i(x, \tau)dx dt <\infty.
\end{equation}
Since $0\le V_i(x, t)\le V(x, t)$ and $m(x)\ge m_*>0$, we have
$$
\int_0^\infty \int_{\mathbb{R}^2} V_i^2(x, \tau)dx dt <\infty.
$$
Consequently, 
\begin{equation*}
\lim_{\tau\rightarrow\infty} \|V_i\|_{2, \mathbb{R}^2\times (\tau, \tau+1)}=0.
\end{equation*}
Let $g(x, t)=F(x, t)-m(x)V(x, t)V_i(x, t)$. We rewrite the equation for $V_i$ as 
$$
\frac{\partial V_i}{\partial t}=\triangledown\cdot D(x)\triangledown V_i - \overrightarrow{C}\cdot \triangledown V_i + g(x, t).
$$
We then can adapt the argument in \cite[Theorem 4.1]{Fitzgibbon2004reaction} to insure that 
$$
\lim_{t\rightarrow\infty} \|V_i(\cdot, t)\|_{\infty, \mathbb{R}^2}=0.
$$

We now examine the convergence of $S_j$. Multiplying both sides of the equation for $S_j$ by $S_j(x, t)$ and integrating over $(0, t)\times \Omega_j$, we obtain
$$
\int_{\Omega_j} S_j^2(x, t) dx + 2\int_0^t\int_{\Omega_j} D_{1j}(x)|\triangledown S_j(x, \tau)|^2dxd\tau \le \int_{\Omega_j} S_{j0}^2(x) dx, \ \ j=1, 2.
$$
This implies 
\begin{equation}\label{l2}
\int_0^\infty\int_{\Omega_j} |\triangledown S_j(x, t)|^2dxdt<\infty. 
\end{equation}
Multiplying both sides of the equation for $S_j$ by $\partial S_j/\partial t$ and integrating over $\Omega_j$, we obtain
$$
\int_{\Omega_j} ( \frac{\partial S_j}{\partial t} )^2 dx + \frac{\partial}{\partial t} \int_{\Omega_j} D_{1j} |\triangledown S_j|^2 dx = - \int_{\Omega_j} \frac{\partial S_j}{\partial t} S_j V_idx, \ \ j=1, 2. 
$$
By Young's inequality, we have 
$$
\frac{3}{4}\int_{\Omega_j} ( \frac{\partial S_j}{\partial t} )^2 dx + \frac{\partial}{\partial t} \int_{\Omega_j} D_{1j} |\triangledown S_j|^2 dx \le \int_{\Omega_j}  S^2_j V^2_idx, \ \ j=1, 2. 
$$
Hence, there exists $M>0$ such that 
$$
\frac{\partial}{\partial t} \int_{\Omega_j} |\triangledown S_j|^2 dx  < M \text{ for all } \ t>0, \ \ j=1, 2. 
$$
Using \eqref{l2}, we have 
\begin{equation}\label{ll2}
\lim_{t\rightarrow\infty} \|\triangledown S_j(\cdot, t)\|_{2, \Omega_j}=0, \ \ j=1, 2.
\end{equation}
Integrating both sides of the equation for $S_j$ over $\Omega_j$, we can see that 
$$
\frac{d}{dt} \int_{\Omega_j} S_j(x, t)dx \le 0, \ \ j=1, 2. 
$$
Hence, there exists nonnegative constant $S^*_j$ such that 
$$
\frac{1}{|\Omega_j|} \int_{\Omega_j} S_j(x, t)dx\rightarrow S^*_j \ \text{as } t\rightarrow\infty, \ \ j=1, 2. 
$$
It then follows from the Poincare's inequality and \eqref{ll2} that
$$
\lim_{t\rightarrow \infty} \|S_j(\cdot, t)-S_j^*\|_{2, \Omega_j}=0, \ \ j=1, 2.
$$
Then by a standard bootstrapping argument, we have 
\begin{equation}\label{ss}
\lim_{t\rightarrow \infty} \|S_j(\cdot, t)-S_j^*\|_{\infty, \Omega_j}=0, \ \ j=1, 2.
\end{equation}

Now suppose that \eqref{beta111} holds.
By Corollary \ref{corollary_uniform}, there exist $\epsilon_0>0$ and $t_0>0$ such that 
$$
V(x, t)>v(x, t)>\epsilon_0, \ \text{ for all } \ x\in \overline{\Omega_1\cup\Omega_2} \ \text{ and } \ t>t_0.
$$
It then follows from \eqref{vi} that 
\begin{equation}\label{vii}
\int_0^\infty \int_{\Omega_j} V_i(x, t)dxdt<\infty, \ \ j=1, 2.
\end{equation}

Finally, we show $S_j^*>0$. Since $S_{10}$ and $S_{20}$ are nontrivial, $S_j(x, t)>0$ for all $x\in\bar\Omega_j$ and $t>0$ by the comparison principle. Without loss of generality, we may assume $S_{j0}(x)>0$ for  all $x\in\bar\Omega_j$. Then we can choose $\epsilon$ small such that $S_{j0}(x)>\epsilon$ for  all $x\in\bar\Omega_j$, $j=1, 2$. Define $U_j(x, t)=S_j(x, t)-\ln (S_j(x, t))$ for $x\in \Omega_j$ and $t\ge 0$.
By 
\begin{eqnarray*}
 -\frac{\partial}{\partial t} \ln (S_j)&=&-\frac{1}{S_j}\frac{\partial S_j}{\partial t}=-\frac{\triangledown\cdot D_{1j}\triangledown S_j-\sigma_j S_j V_i}{S_j}\\
&=&-\triangledown\cdot D_{1j}\triangledown \ln(S_j)-\frac{D_{1j}|\triangledown S_j|^2}{S_j^2}+\sigma_jV_i\\
&\le&-\triangledown\cdot D_{1j}\triangledown \ln(S_j)+\sigma_jV_i,
\end{eqnarray*}
 we have
 \begin{equation*}
 \left\{
\begin{array}{lll}
\frac{\partial U_j}{\partial t}\le \triangledown\cdot D_{1j}\triangledown U_j + h_j(x, t), \ \ \ &x\in\Omega_j, t>0, \medskip \\
\frac{\partial U_j}{\partial n}=0, \ \ \ &x\in\partial\Omega_j, t>0, \medskip\\
U_j(x, 0)\le \|S_{j0}\|_{\infty, \Omega_j}-\ln(\epsilon), \ \ \ &x\in\Omega_j,
\end{array} 
 \right.
 \end{equation*}
 with $h_j=-\sigma_jS_j V_i+\sigma_jV_i$, $j=1, 2$.
Using \eqref{vii}, we know 
 $$
 \int_0^\infty \int_{\Omega_j} h_j(x, t)dxdt<\infty.
 $$
By the comparison principle, we have
$$
\int_{\Omega_j} U_j(x, t) dx\le  |\Omega_j| (\|S_{j0}\|_{\infty, \Omega_j}-\ln(\epsilon)) +  \int_0^\infty \int_{\Omega_j} h_j(x, t)dxdt<\infty , \ \ t>0, j=1, 2.
$$ 
By virtue of \eqref{ss}, we observe that 
$$
S_j^*-\ln(S_j^*)<\infty, 
$$
which implies $S_j^*>0$, $j=1, 2$.
\end{proof}

We remark that if there exists constant $\beta_*>0$ such that $\beta(x)\ge \beta_*$ for all $x\in\mathbb{R}^2$ then \eqref{beta111} holds and the conclusion of Theorem \ref{theorem_asymptotic} is true.

\begin{remark}
We point out that the
analytical arguments of this section are readily extendable to handle the case of a diffusing host in each of the subregions. However, the point of this section is demonstrate the spread of the disease over much larger region which contains numerous relatively small subregions not to analyze the local dynamics among subregions in close proximity to one another.
\end{remark}

\section{A Special Case With Hosts Not Diffusing}

In this section, we will focus upon the advective diffusive spread of vector borne disease over a large region, where the host species is confined to multiple isolated subregions. Since our interest is region wide, we shall not be concerned with the spatial dynamics of the hosts within the subregions.  We consider distinct sub-regions $\Omega_j$ of $\mathbb{R}^2$ for $j=1, 2, ..., N$ with smooth boundaries $\partial \Omega_j$, such that $\Omega_j$ lies locally on one side of $\partial\Omega_j$ for each $j=1, 2, ..., N$. The sub-regions are non-overlapping and separated:
$$
\Omega_j\cap\Omega_k\neq\emptyset \ \ \ \text{ and } \ \ \ \text{dist}(\bar\Omega_j, \bar\Omega_k)>0, \ \ \text{ for }\  j, k =1, 2, ..., N\ \text{ with } \ j\neq k. 
$$
The circulation of the pathogen in each of the subregions is described by a spatially distributed non-diffusive SEIR model with compartments $S_j(x, t), E_j(x, t)$, and $I_j(x, t)$ for $j=1, 2, ..., N$. Again we need not consider the removed classes $R_j(x, t)$. Susceptible hosts in each subregion are infected via contact with infected vectors, which is modeled by $\sigma_jS_j V_i$. Infected vectors can recruited by means of contact with infected hosts in any of the sub-regions, and this process is modeled by the incidence term:
\begin{equation*}
f(x, t, I_{1},..., I_{N}, V_s)=
\left\{
\begin{array}{lrl}
\alpha_{j}(x) I_{j}V_s, \ \ &x\in\bar\Omega_j, t>0  \\
0, \ \ \ &x\in\mathbb{R}^2/\cup_{j=1}^N\bar\Omega_j, t>0. 
\end{array}
\right.
\end{equation*}

The N-subregions model is as follows:
\begin{itemize}
\item \textbf{Vector Populations}
\begin{equation}\label{main11}
\left\{
\begin{array}{lll}
\frac{\partial}{\partial t} V_s =  \triangledown\cdot D(x)\triangledown V_s-\overrightarrow{C}(x, t)\cdot \triangledown  V_s+\beta(x) V-m(x) V_s V- f(x, t, I_1, ... I_N, V_s),  \\
\hspace{4.5in} (x, t)\in Q(0, \infty), \\  
\frac{\partial}{\partial t} V_i =  \triangledown\cdot D(x)\triangledown V_i-\overrightarrow{C}(x, t)\cdot \triangledown  V_i-m(x) V_i V+ f(x, t, I_1, ..., I_N, V_s),\\
\hspace{4.5in} (x, t)\in Q(0, \infty),  \medskip \\
V_s(x, 0)=V_{s0}(x), \ \ \ V_i(x, 0)=V_{i0}(x), \ \ \ x\in\mathbb{R}^2.
\end{array}
\right.
\end{equation}

\item \textbf{Host Populations}
\begin{equation}\label{main22}
\left\{
\begin{array}{lll}
\frac{\partial}{\partial t} S_{j} =  -\sigma_j(x) S_jV_i, \ \ \ (x, t)\in \Omega_j\times (0, \infty), j=1, 2, ..., N, \medskip \\   
\frac{\partial}{\partial t} E_{j} = \sigma_j(x) S_jV_i-\lambda E_j , \ \ \ (x, t)\in \Omega_j\times (0, \infty), j=1, 2, ..., N, \medskip \\ 
\frac{\partial}{\partial t} I_{j} = \lambda  E_j-\delta I_j , \ \ \ (x, t)\in \Omega_j\times (0, \infty), j=1, 2, ..., N, \medskip \\ 
S_{j}(x, 0)=S_{j0}(x), \ \ \ E_{j}(x, 0)=E_{j0}(x),\ \ \ I_{j}(x, 0)=I_{j0}(x), \ \ \ x\in\Omega_j, , j=1, 2, ..., N.
\end{array}
\right.
\end{equation}
\end{itemize}

The hypotheses (A1)-(A7) and (A9)-(A12) are the same except that $j=1, 2$ is replaced by $j=1, 2, ..., N$. We modify our notion of a classical strong solution:
\begin{defn}
We say $(V_s(x, t), V_i(x, t))$, $(x, t)\in Q(0, \infty)$, and $(S_{j}(x, t), E_{j}(x, t), I_{j}(x, t))$, $(x, t)\in\Omega_j\times (0, \infty)$, $j=1, N$, are strong global solution of \eqref{main11}-\eqref{main22}, if
\begin{itemize}
\item $S_{j}, E_{j}, I_{j}\in C^{0, 1}((0, \infty)\times\bar\Omega_j)$, $j=1,2,...,N$;
\item $V_s, V_i\in C((0, \infty); C_b(\mathbb{R}^2 )\cap L^1(\mathbb{R}^2))$;
\item For each $p>1$, $V_s, V_i\in C((0, \infty); W^{2, p}(\mathbb{R}^2))$; 
\item The partial differential equations and initial conditions are a.e. satisfied.
\end{itemize}
\end{defn}

We have the following well-posedness result:
\begin{theorem}\label{existence1}
Assume that (A1)-(A7) and (A9)-(A12) hold. Then there is a unique nonnegative global bounded strong solution of \eqref{main11}-\eqref{main22}.
\end{theorem}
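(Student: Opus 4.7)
The plan is to mirror the argument of Theorem \ref{existence}, modifying only the ingredients affected by (i) the number of sub-regions being $N$ rather than $2$ and (ii) the absence of diffusion in the host equations. For local well-posedness I would set up a Banach contraction on a short time interval $[0,T]$ in a space of the form $C([0,T];C_b(\mathbb{R}^2)\cap L^1(\mathbb{R}^2))$ for each of $V_s,V_i$, together with $C(\bar\Omega_j\times[0,T])$ for each of $S_j,E_j,I_j$. Given a candidate $V_i$, the host system \eqref{main22} decouples at each fixed $x\in\bar\Omega_j$ into a three-dimensional linear ODE in $t$ with continuous coefficients depending on $V_i(x,\cdot)$; Picard--Lindel\"of produces a unique solution $(S_j,E_j,I_j)$ that is $C^1$ in $t$, continuous in $x$ by continuous dependence on parameters, and nonnegative, since $\partial_tS_j=-\sigma_jV_iS_j$ with $S_{j0}\ge 0$ yields $S_j\ge 0$, and then $\partial_tE_j+\lambda E_j=\sigma_jS_jV_i\ge 0$, $\partial_tI_j+\delta I_j=\lambda E_j\ge 0$ give $E_j,I_j\ge 0$. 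Feeding the resulting $I_j$ into the incidence $f$ and solving the semilinear reaction-diffusion-advection equations for $V_s,V_i$ by the Green's function / variation-of-constants representation associated with $\partial_t-\triangledown\cdot D\triangledown+\overrightarrow{C}\cdot\triangledown$ yields $V_s,V_i\in C([0,T];C_b(\mathbb{R}^2)\cap L^1(\mathbb{R}^2))$; the parabolic $L^p$ theory combined with the boundedness of $f$ then supplies the $W^{2,p}$ spatial regularity demanded in the definition of strong solution. A small-$T$ contraction closes the loop and yields a unique strong solution on a maximal interval $[0,T_{\max})$.

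For global existence the $L^\infty$ a priori bound is the key, and the structure here is particularly clean. As in Theorem \ref{existence}, the incidence terms cancel in the sum $V=V_s+V_i$, so $V$ satisfies exactly the scalar problem \eqref{vector}. Theorem \ref{theorem_vector} then gives $0\le V_s,V_i\le V\le M$ with $M=\max\{\beta^*/m_*,\|V_{s0}+V_{i0}\|_{\infty,\mathbb{R}^2}\}$ on $[0,T_{\max})$. The host bounds are immediate because the host equations are pointwise ODEs: $S_j(x,t)=S_{j0}(x)\exp\bigl(-\int_0^t\sigma_j(x)V_i(x,s)\,ds\bigr)$ gives $0\le S_j\le\|S_{j0}\|_{\infty,\Omega_j}$; then $\partial_tE_j+\lambda E_j\le\|\sigma_j\|_{\infty,\Omega_j}\|S_{j0}\|_{\infty,\Omega_j}M$ yields $0\le E_j\le\max\{\|E_{j0}\|_{\infty,\Omega_j},\|\sigma_j\|_{\infty,\Omega_j}\|S_{j0}\|_{\infty,\Omega_j}M/\lambda\}$ via the integrating factor, and an identical argument with the $-\delta I_j$ dissipation bounds $I_j$. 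These bounds are independent of $T_{\max}$, so standard continuation forces $T_{\max}=\infty$.

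Uniqueness follows from a routine Gronwall estimate on the difference of two strong solutions, using that the nonlinearities $\beta V - mV_sV - f$, $-mV_iV+f$, $-\sigma_j S_jV_i$, $\sigma_jS_jV_i-\lambda E_j$, $\lambda E_j-\delta I_j$ are globally Lipschitz on the invariant rectangle produced above. The main obstacle, and the reason for weakening the notion of solution, is the jump discontinuity of the incidence term $f$ across each $\partial\Omega_j$: classical $C^{2,1}$ regularity for $V_s,V_i$ is simply not available, which is precisely why the definition allows $W^{2,p}(\mathbb{R}^2)$ spatial regularity for the vector components, accommodating bounded but discontinuous forcing via Calder\'on--Zygmund theory for the heat operator. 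Compared with Theorem \ref{existence}, the host regularity is also weakened because the host equations have become pointwise ODEs, so no gain of spatial smoothness beyond that inherited from the initial data and from $V_i$ can be expected.
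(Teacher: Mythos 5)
Your proposal is correct and follows the same route the paper intends: the paper states Theorem \ref{existence1} without a separate proof, implicitly reusing the argument of Theorem \ref{existence} (Green's function/variation-of-parameters local theory, nonnegativity, the cancellation giving $V=V_s+V_i$ satisfying \eqref{vector} and hence the bound of Theorem \ref{theorem_vector}, then host bounds and continuation), and your adaptation simply replaces the parabolic comparison for $S_j,E_j,I_j$ with the explicit pointwise ODE formulas, which is exactly the simplification the non-diffusing host case affords.
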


We then establish the following result about the global asymptotic behavior of the solutions of \eqref{main11}-\eqref{main22}.
\begin{theorem}
Assume that (A1)-(A7) and (A8)-(A12) hold.  Then there exists nonnegative $S^*_{j}\in L^\infty(\Omega_j)$, $j=1,2,...,N$, such that 
\begin{eqnarray}
&&\lim_{t\rightarrow\infty} \|S_{j}(\cdot, t)- S^*_{j}\|_{p, \Omega_j}=0, \ \ \text{for any } p>1,  \ j=1, 2,..., N, \\
&&\lim_{t\rightarrow\infty} \|E_{j}(\cdot, t)\|_{\infty, \Omega_j}=0, \ j=1, 2,...,N, \\
&&\lim_{t\rightarrow\infty} \|I_{j}(\cdot, t)\|_{\infty, \Omega_j}=0, \ j=1, 2,...,N,\\
&&\lim_{t\rightarrow\infty} \|V_{i}(\cdot, t)\|_{\infty, \mathbb{R}^2}=0.
\end{eqnarray} 
Furthermore, if $\overrightarrow{C}=0$, $V_{s0}+V_{i0}$ and $S_{j0}$ are nontrivial, and  
\begin{equation}\label{beta}
\int_{\mathbb{R}^2} \beta(x)dx>\frac{\pi^2 D_{M}}{2},
\end{equation}
then $S^*_j(x)>0$ for a.e. $x\in\Omega_j$, provided $S_{j0}(x)>0$, $j=1,2,...,N$.
\end{theorem}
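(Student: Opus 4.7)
The plan is to mimic the strategy of Theorem \ref{theorem_asymptotic}, exploiting that the host equations are now pointwise ODEs in $t$ (no spatial operator), which simplifies some steps and forces others to be carried out pointwise rather than via parabolic $L^p$ estimates.

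First I would form the linear combination $\partial_t(2S_j+2E_j+I_j)=-\lambda E_j-\delta I_j$ and integrate over $\Omega_j\times(0,\infty)$ to obtain $\int_0^\infty\int_{\Omega_j}(E_j+I_j)\,dx\,dt<\infty$ for each $j$. Together with the uniform bound $V_s\le V$ this implies $F(x,t):=f(x,t,I_1,\dots,I_N,V_s)\in L^1(\mathbb R^2\times(0,\infty))$. Integrating the $V_i$ equation over $\mathbb R^2$ and using (A4) to kill the advection term (exactly as in the proof of Theorem \ref{theorem_asymptotic}), I would then obtain $\int_0^\infty\int_{\mathbb R^2}m(x)V(x,t)V_i(x,t)\,dx\,dt<\infty$, and since $V_i\le V$ and $m\ge m_*$, this yields $\|V_i\|_{L^2(\mathbb R^2\times(\tau,\tau+1))}\to 0$ as $\tau\to\infty$.

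Next, I would rewrite the $V_i$ equation as $\partial_t V_i=\triangledown\cdot D\triangledown V_i-\overrightarrow{C}\cdot\triangledown V_i+g$ with $g=F-mVV_i$, and invoke the argument of \cite[Theorem 4.1]{Fitzgibbon2004reaction} to conclude $\|V_i(\cdot,t)\|_{\infty,\mathbb R^2}\to 0$. Once this is in hand, Duhamel's formula for the linear ODE
$$E_j(x,t)=e^{-\lambda t}E_{j0}(x)+\int_0^t e^{-\lambda(t-s)}\sigma_j(x)S_j(x,s)V_i(x,s)\,ds,$$
together with the boundedness of $\sigma_j S_j$ and the uniform decay of $V_i$, gives $\|E_j(\cdot,t)\|_{\infty,\Omega_j}\to 0$ by splitting the integral at $t/2$; the same maneuver applied to $I_j$ with forcing $\lambda E_j$ produces $\|I_j(\cdot,t)\|_{\infty,\Omega_j}\to 0$.

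For $S_j$, the equation $\partial_t S_j=-\sigma_j S_j V_i\le 0$ shows that $t\mapsto S_j(x,t)$ is nonincreasing and bounded below by $0$, so $S_j^*(x):=\lim_{t\to\infty}S_j(x,t)$ exists pointwise with $0\le S_j^*\le\|S_{j0}\|_{\infty,\Omega_j}$, and dominated convergence then delivers $L^p$ convergence on $\Omega_j$ for every $p>1$. For the positivity assertion, the explicit formula
$$S_j^*(x)=S_{j0}(x)\exp\Bigl(-\int_0^\infty\sigma_j(x)V_i(x,s)\,ds\Bigr)$$
reduces the task to showing $\int_0^\infty V_i(x,s)\,ds<\infty$ for a.e.\ $x\in\Omega_j$ with $S_{j0}(x)>0$; Corollary \ref{corollary_uniform} provides $\epsilon_0,t_0>0$ with $V(x,t)\ge\epsilon_0$ on $\overline{\Omega_j}\times[t_0,\infty)$, and combining this with $\int_0^\infty\int_{\mathbb R^2}mVV_i\,dx\,dt<\infty$ yields $\int_{t_0}^\infty\int_{\Omega_j}V_i\,dx\,dt<\infty$, after which Fubini finishes. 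The hard part I expect to be the $L^\infty$ decay of $V_i$ on the unbounded domain $\mathbb R^2$ with advection present: the cited argument of \cite{Fitzgibbon2004reaction} must be carefully adapted to accommodate $\overrightarrow{C}$ and a source $g$ that is only spatially localized in $\bigcup_j\Omega_j$; everything else reduces to ODE-level consequences of $\|V_i(\cdot,t)\|_\infty\to 0$ together with the monotonicity of $S_j$.
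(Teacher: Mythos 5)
Your proposal follows essentially the same route as the paper's proof: the linear combination $2S_j+2E_j+I_j$ giving $E_j,I_j\in L^1$, integration of the $V_i$ equation with (A4) to get $\int_0^\infty\int_{\mathbb{R}^2}mVV_i\,dx\,dt<\infty$, the cited argument of \cite[Theorem 4.1]{Fitzgibbon2004reaction} for $\|V_i(\cdot,t)\|_{\infty,\mathbb{R}^2}\to 0$, Duhamel for $E_j$ and $I_j$, monotonicity plus dominated convergence for $S_j$, and the explicit exponential formula combined with Corollary \ref{corollary_uniform} for the positivity of $S_j^*$. The only cosmetic difference is that you split the Duhamel integral at $t/2$ where the paper evaluates the convolution limit by L'H\^opital's rule, which is immaterial.
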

\begin{proof}
We only sketch the proof. By \eqref{main22}, we have 
\begin{equation}\label{sib}
\lambda \int_0^t\int_{\Omega_j} E_{j}(x, s)dxds+\delta\int_0^t\int_{\Omega_j} I_{j}(x, s)dxds\le \int_{\Omega_j} (S_{j0}(x)+E_{j0}(x)+I_{j0}(x)) dx,  \ \ j=1, 2, ..., N,
\end{equation}
which leads to
$$
\int_0^\infty \int_{\mathbb{R}^2} F(x, t) dx dt<\infty. 
$$
Similar to the proof of  Theorem \ref{theorem_asymptotic}, we can prove 
$$
\lim_{t\rightarrow\infty} \|V_i(\cdot, t)\|_{\infty, \mathbb{R}^2}=0.
$$
By the second equation of \eqref{main22}, we have 
\begin{eqnarray*}
E_j(x, t)&=&e^{-\lambda t} E_{j0}(x, t)+\int_0^t e^{-\lambda (t-s)} \sigma_j S_j(x, s)V_i(x, s)ds  \\
&\le& M_j e^{-\lambda t}+M_j\int_0^t e^{-\lambda (t-s)} \|V_i(\cdot, s)\|_{\infty, \mathbb{R}^2}ds,
\end{eqnarray*}
for some positive constant $M_j$,  $j=1, 2,..., N$. Noticing 
$$
\lim_{t\rightarrow\infty} \int_0^t e^{-\lambda (t-s)} \|V_i(\cdot, s)\|_{\infty, \mathbb{R}^2}ds = \lim_{t\rightarrow\infty} \frac{\int_0^t e^{\lambda s} \|V_i(\cdot, s)\|_{\infty, \mathbb{R}^2}ds}{e^{\lambda t}}=\lim_{t\rightarrow\infty} \frac{e^{\lambda t} \|V_i(\cdot, t)\|_{\infty, \mathbb{R}^2}}{\lambda e^{\lambda t}}=0,
$$
we have 
$$
\lim_{t\rightarrow\infty} \|E_j(\cdot, t)\|_{\infty, \Omega_j}=0, \ \ j=1,2,...,N.
$$
Similarly, by the third equation of \eqref{main22}, we have
$$
\lim_{t\rightarrow\infty} \|I_j(\cdot, t)\|_{\infty, \Omega_j}=0, \ \ j=1,2,...,N.
$$
Since $\frac{\partial S_j}{\partial t}\le 0$, there exists nonnegative $S_j^*\in L^\infty(\Omega_j)$ such that $S_j(x, t)\rightarrow S_j^*(x)$ as $t\rightarrow\infty$ for all $x\in\Omega_j$, $j=1,2,...,N$. By Lebesgue Theorem, $S_j(\cdot, t)\rightarrow S_j^*$ as $t\rightarrow\infty$ in $L^1(\Omega_j)$. Noticing the boundedness of $S_j$, we have $S_j(\cdot, t)\rightarrow S_j^*$ as $t\rightarrow\infty$ in $L^p(\Omega_j)$ for any $p>1$, $j=1,2,...,N$.
The same as the proof of Theorem \ref{theorem_asymptotic}, we can show
\begin{equation*}
\int_0^\infty \int_{\Omega_j} V_i(x, t)dxdt<\infty, 
\end{equation*}
which means 
\begin{equation*}
\int_0^\infty  V_i(x, t) dt<\infty, \ \ \text{a.e.}\  x\in\Omega_j, \ \ j=1, 2,...,N,
\end{equation*}
By the first equation of \eqref{main22}, we have
$$
S_j(x, t)=S_{j0}(x) e^{-\int_0^t \sigma(x) V_i(x, \tau)d\tau},
$$
which implies $S_j^*(x)>0$ for a.e. $x\in\Omega_j$ provided that $S_{j0}(x)>0$, $j=1,2,...,N$.
\end{proof}

\section{An Application to Bluetongue Disease}

We illustrate the model in Section 3 with numerical simulations of bluetongue disease in sheep. 
Bluetongue disease is a non-contagious viral disease transmitted via bites of midges of the genera {\it Culicoides imicoides}, 
{\it Culicoides variipennis}, and other culicoides species carrying Bluetonge virus (BTV) to domestic and wild ruminants. Disease transmission follows a criss-cross  pattern with BTV infected midge vectors infecting uninfected host ruminants and BTV infected host  ruminants transferring the disease to uninfected  vector midges. Although a variety of ruminants, including cattle, deer, goats, dromedaries, and antelopes, can contract these diseases, our focus will be on  sheep. In sheep the effects of bluetongue disease can be devastating with high rates of morbidity and high rates of mortality \cite{Wik}. Bluetongue disease can have major negative impact on the sheep industry, as losses can accrue from reduced wool and meat production. 

A variety of mathematical models of bluetongue epidemics have been developed, including stochastic event-based probabilistic models \cite{Graesboll}, \cite{Kelso}, \cite{Sumner}, discrete time stage structured models \cite{White}, data-based atmospheric dispersion model \cite{Agren}, ordinary differential equations models \cite{CharronSeegers}, \cite{Gubbins}, ordinary functional differential equations models \cite{Gourley}, partial differential equations models with diffusion, but without advection \cite{CharronKluiters}, and partial differential equations models with diffusion and advection, but only vectors \cite{Fernandez}. We will use the vector-host diffusion and advection terms of the model in Section 3 to focus on the spatial propagation of a bluetongue epidemic by short range and long range movement of BTV infected midges.

Adult midges are approximately $1-3 \, mm$ long, and easily transported by winds \cite{Carpenter}, \cite{Fernandez}, \cite{SellersPedgley},  \cite{SellersGibbs}. It has been observed that in the absence of strong winds, adult midges typically remain within a developmental habitant range of approximately $5 \, km$ radius \cite{Pioz}, \cite{PurseCarpenter}. It has also been observed that strong wind-facilitated dispersal of midges can be hundreds of miles \cite{Brenner}, \cite{Pioz}, \cite{PurseCarpenter}, \cite{Sedda}, \cite{Sumner}, \cite{Wilson}. We will assume that midges are transported both by short-range wind movement of a few kilometers, and by semi-passive long-range wind-aided movement of hundreds of kilometers.

Bluetonge  disease is typically seasonal in regions in which frosts kill the adult midges. A controversy exists concerning the disease survival between seasons in such regions, since adult infected midges typically do not survive more than 2 or 3 months \cite{PurseCarpenter}, \cite{Wilson}. Some hypothetical explanations are the following \cite{Maclachlan}, \cite{PurseBethan}, \cite{Wik}, \cite{Wilson}: (1) a few BTV infected midges survive mild winters by locating indoors, (2) some BTV infected sheep may have chronic or latent infections over a winter,  and (3) BTV infected midges can migrate long distances from warmer temperate regions with year-round epidemics \cite{PurseCarpenter}, \cite{Sedda}.

In our numerical simulations, we will vary the advection parameter ${\vec C}$ that corresponds to the long-range directed movement of midges.  The spatial units are kilometers and the time units are months. The sheep subregions are $\Omega_1$, 
$\Omega_2$, and $\Omega_3$, which are circular regions with radii of approximately  $5 \, km$, centered at $(25 \, km, 30 \, km)$,  $(50 \, km, 30 \, km)$, and $(125 \, km, 30 \, km)$, respectively. The uninfected midges are assumed to be uniformly distributed throughout the entire region of the epidemic setting \cite{Cuellar}, \cite{Guis}. This uniform distribution of uninfected midges  is not altered significantly by the epidemic outbreak. 

The initial conditions, which are spatially normally distributed, are the same for all simulations. At time 0,  BTV infected midges are only present in  $\Omega_1$. In the simulations, at time 0, the infection breaks out in $\Omega_1$, but is not present in 
$\Omega_2$ or $\Omega_3$. The midge infection rate and the sheep infection rate are assumed to be the same in $\Omega_1$,$\Omega_2$, and $\Omega_3$. The diffusion term in the simulations corresponds to local short-range movement and the advection term corresponds to wind-directed midge movement in the $x$-direction. The simulations have the same parameters, except for the advection coefficient ${\vec C}$. The initial conditions of the simulations are given in Figure \ref{figure1} and Table \ref{table:ic}. The parameters of the simulation are given in Table \ref{table:parameters}. The {\it MATHEMATICA} code for the simulations is available upon request.

\clearpage

\begin{table}[ht]
\caption{Initial Conditions for the Simulations}
\vspace{.1in}
\centering 
\begin{tabular}{c c c}
\hline
\hline                      
Symbol 
& Value
& Total Number
\\ [0.5ex]
\hline
\hline
\vspace{.1in}
$S_1(x, y, 0)$ & $30.0 \exp \bigg[1 - \bigg( \frac{(25-x)^2}{2} - \frac{(30-y)^2}{2} \bigg)\bigg]$ &  512 
\\
$E_1(x,y, 0) = I_1(x,y, 0)$ & $0.01 \times S_1(0,x,y)$  &  5 
\\ 
$S_2(x,y, 0)$ (1st simulation) & $31.0 \exp \bigg[1 - \bigg(\frac{50-x)}{2} - \frac{(30-y)^2}{2} \bigg)\bigg]$ &  530 
\\
$S_2(x,y, 0)$ (2nd simulation) & $31.0 \exp \bigg[1 - \bigg(\frac{125-x)}{2} - \frac{(30-y)^2}{2} \bigg)\bigg]$ &  530 
\\

$E_2(x,y, 0) = I_2(x,y, 0)$ & $0.0$  &  0
 \\
$V_u(x,y, 0)$ & $1000$ per $km^2$  & \cite{Cuellar},\cite{Guis}
\\
$V_i(x,y, 0)$ & $1.0 \times I_0(x,y, 0)$  & 5
\\ [1ex]
\hline
\end{tabular}
\label{table:ic}
\end{table}

\begin{table}[ht]
\caption{Parameters for the Simulations}
\vspace{.1in}
\centering 
\begin{tabular}{c c c c}
\hline
\hline                       
Symbol 
& Meaning
& Interpretation
& Value
\\ [0.5ex]
\hline
\hline
$\beta$ & midges birth rate & 1 per month per adult & 1.0 \cite{PurseCarpenter},\cite{Wilson}
\\
m & midges death rate & 1 month lifespan &1/1000 \cite{PurseCarpenter},\cite{Wilson}
\\
$\sigma_1$ & host infection rate in $\Omega_1$  & per infected midge & 1.0  
\\
$\sigma_2$ & host infection rate in $\Omega_2$  & per infected midge & 1.0  
\\
$\sigma$ & midge infection rate  & per infected host & 0.005
\\
$\lambda$ & host incubation period  & 1 week & 4.0  \cite{Wik}
\\
$\delta$ & host infectious period  & 1 month & 1.0  \cite{Wik}
\\
$D$ & midge diffusion rate  & short-range movement & 1.0
\\
${\vec C}$ & midge advection rate  & long-range wind-aided movement 
& $-10.0  \left[ \begin{array}{cc}
1   \\
0   \\
\end{array} \right]$
\\
& 1st simulation in $\Omega_1$,$\Omega_2$  &  10.0 km per month, x-direction &
\\  
${\vec C}$ & midge advection rate  & long-range wind-aided movement 
& $-20.0  \left[ \begin{array}{cc}
1   \\
0   \\
\end{array} \right]$
\\
& 2nd simulation in $\Omega_1$,$\Omega_3$  &  20.0 km per month, x-direction &
\\  
\\ [1ex]
\hline
\end{tabular}
\label{table:parameters}
\end{table}

\clearpage

In the absence of the long-range movement advection term, that is, ${\vec C} = 0.0  \left[ \begin{array}{cc}
1   \\
0   \\
\end{array} 
\right]$, 
the epidemic remains in 
$\Omega_1$, and does not arrive at the sites $\Omega_2$ or $\Omega_3$ (Figures \ref{figure1}, \ref{figure2}, and \ref{figure3}).

\begin{figure}[ht]
\begin{center}
{\includegraphics[width=6.5in,height=1.6in]{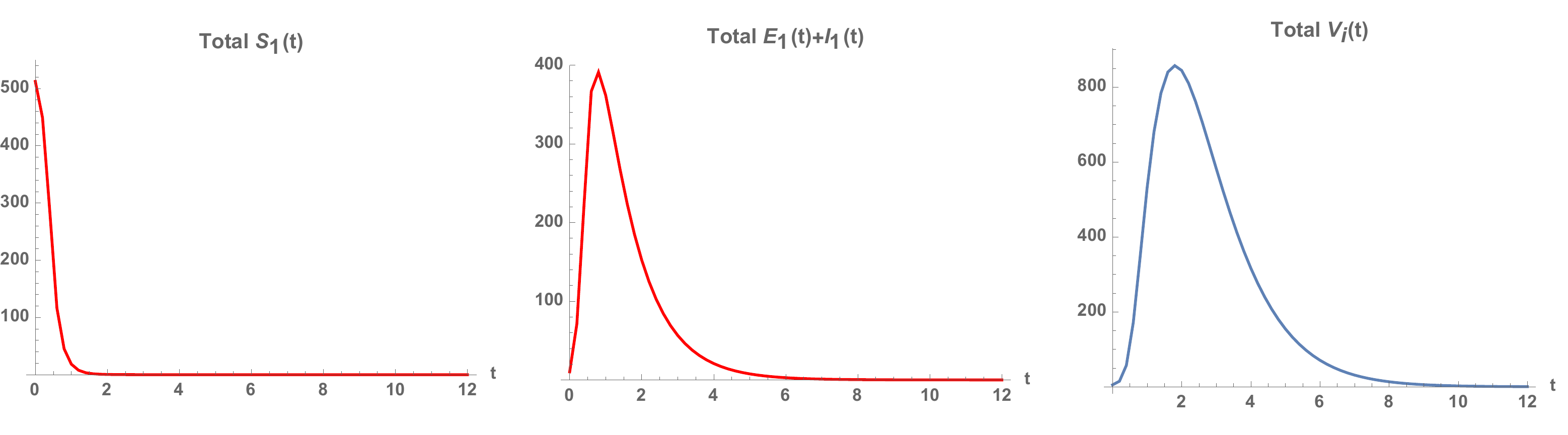}}
\end{center}
\caption{
With the advection term 
${\vec C} = {\vec {\bf 0}}$,
the population of infected sheep and the population of infected midges are effectively $0$ at time $t=12$.}
\label{figure1}
\end{figure}

\begin{figure}[ht]
\begin{center}
{\includegraphics[width=6.5in,height=2.0in]{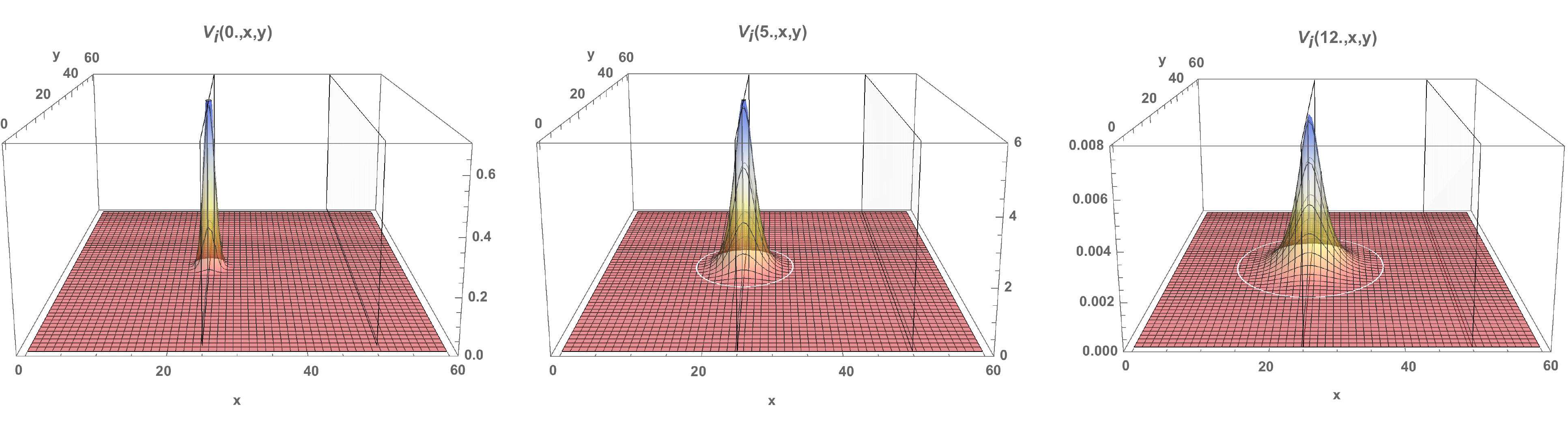}}
\end{center}
\caption{With the advection term 
${\vec C} = {\vec {\bf 0}}$, the short-range movement of midges due to diffusion, is insufficient for the population of infected midges to arrive at 
$\Omega_2$ (centered at $50 \, km$) or $\Omega_3$ (centered at $125 \, km$) before time $t=12$. Thus, the epidemic does not break out in $\Omega_2$ or $\Omega_3$.}
\label{figure2}
\end{figure}

\begin{figure}[ht]
\begin{center}
{\includegraphics[width=6.5in,height=2.0in]{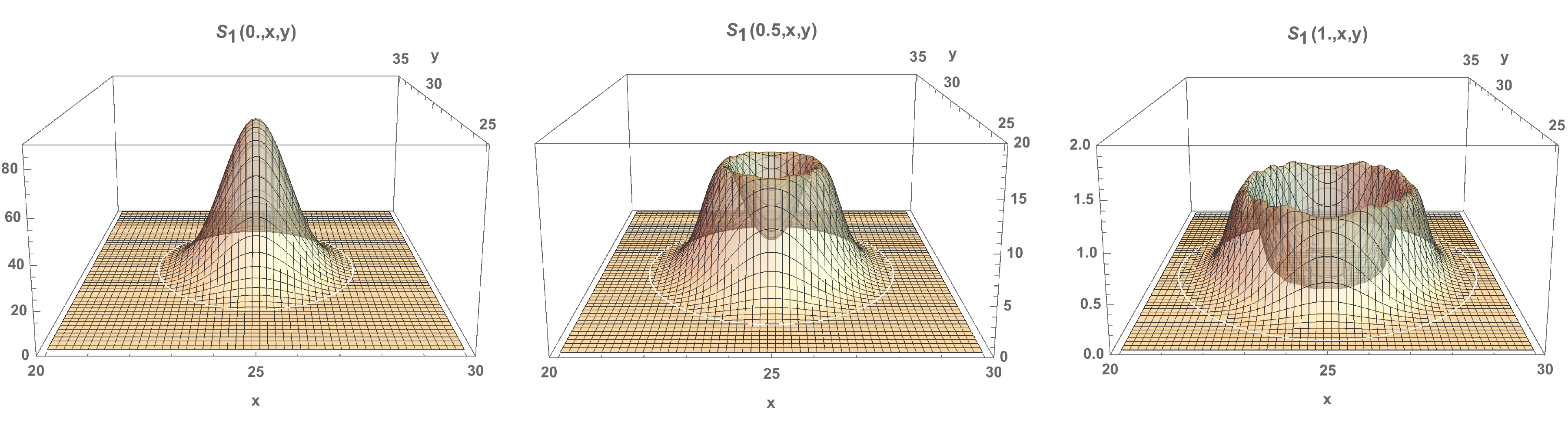}}
\end{center}
\caption{The spatial  distributions of the host population $S_1(x,y, t)$ at times $t = 0.0, \, 0.5$, and $1.0$ with the advection term 
${\vec C} = {\vec {\bf 0}}$. In the absence of long-range movement of midges due to advection, all hosts in $\Omega_1$ become infected by time $t = 2.0$.}
\label{figure3}
\end{figure}

\vspace{2.0in}

\subsection{First simulation -  lower advection coefficient}

In the first simulation, with the value of ${\vec C} =-10.0  \left[ \begin{array}{cc}
1   \\
0   \\
\end{array} 
\right]$, 
the wind-aided long-range movement of  BTV infected midges due to advection, plus the short-range movement due to diffusion, is sufficient to transport the infected midges to $\Omega_2$.  (Figures \ref{figure4},\ref{figure5}, and \ref{figure6}).

\begin{figure}[ht]
\begin{center}
{\includegraphics[width=6.5in,height=1.6in]{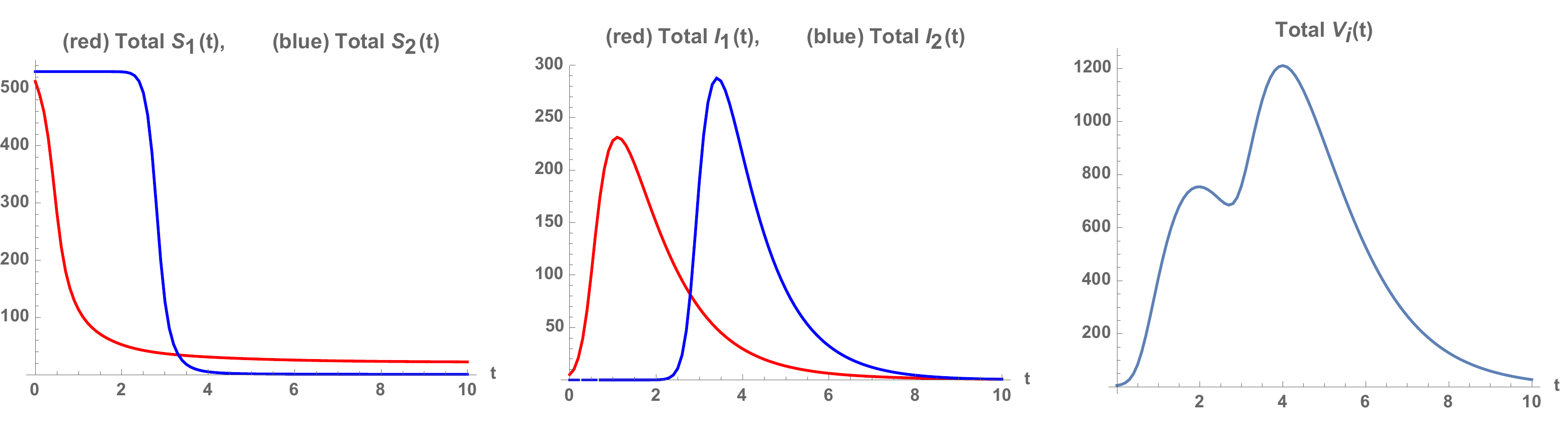}}
\end{center}
\caption{The total populations $S_1(t)$, $S_2(t)$, $I_1(t)$, $I_2(t)$, and $V_I(t)$ over 10 months. The epidemic breaks out in $\Omega_2$ at approximately 2 months. Not all hosts in the 1st site become infected.
All hosts in the second site $\Omega_2$ become infected by approximately 8 months.}
\label{figure4}
\end{figure}

\begin{figure}[ht]
\begin{center}
{\includegraphics[width=6.5in,height=3.5in]{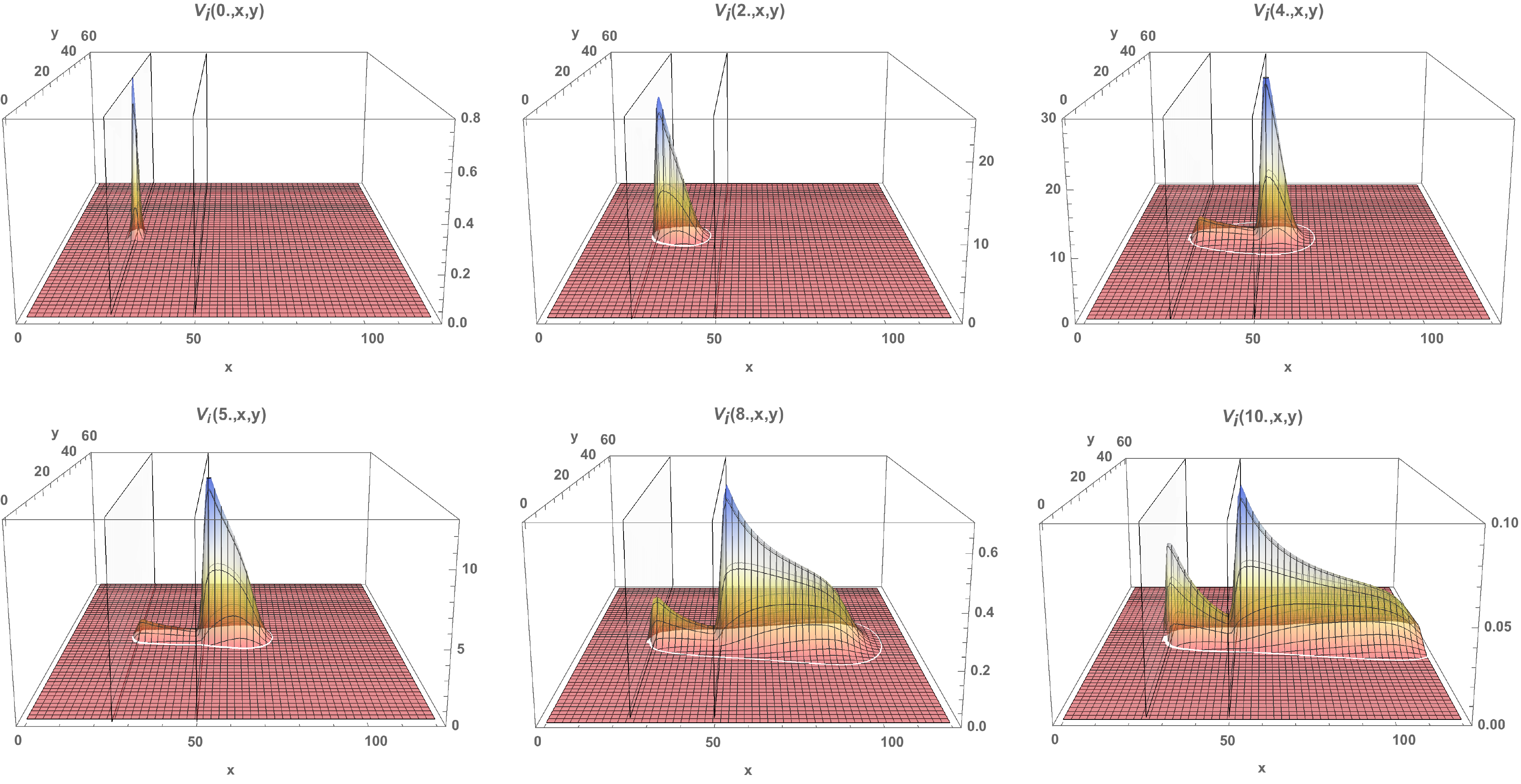}}
\end{center}
\caption{The spatial distributions of the infected vectors $V_i(x,y, t)$ at times $t = 0, 2, 4, 5 ,6, 10$ months. The long-range wind-aided advection movement plus the short-range diffusion movement of the infected vectors from the first site $\Omega_1$ is sufficient to initiate an epidemic outbreak at the second site $\Omega_2$ within approximately 2 months.}
\label{figure5}
\end{figure}

\begin{figure}[ht]
\begin{center}
{\includegraphics[width=6.5in,height=3.3in]{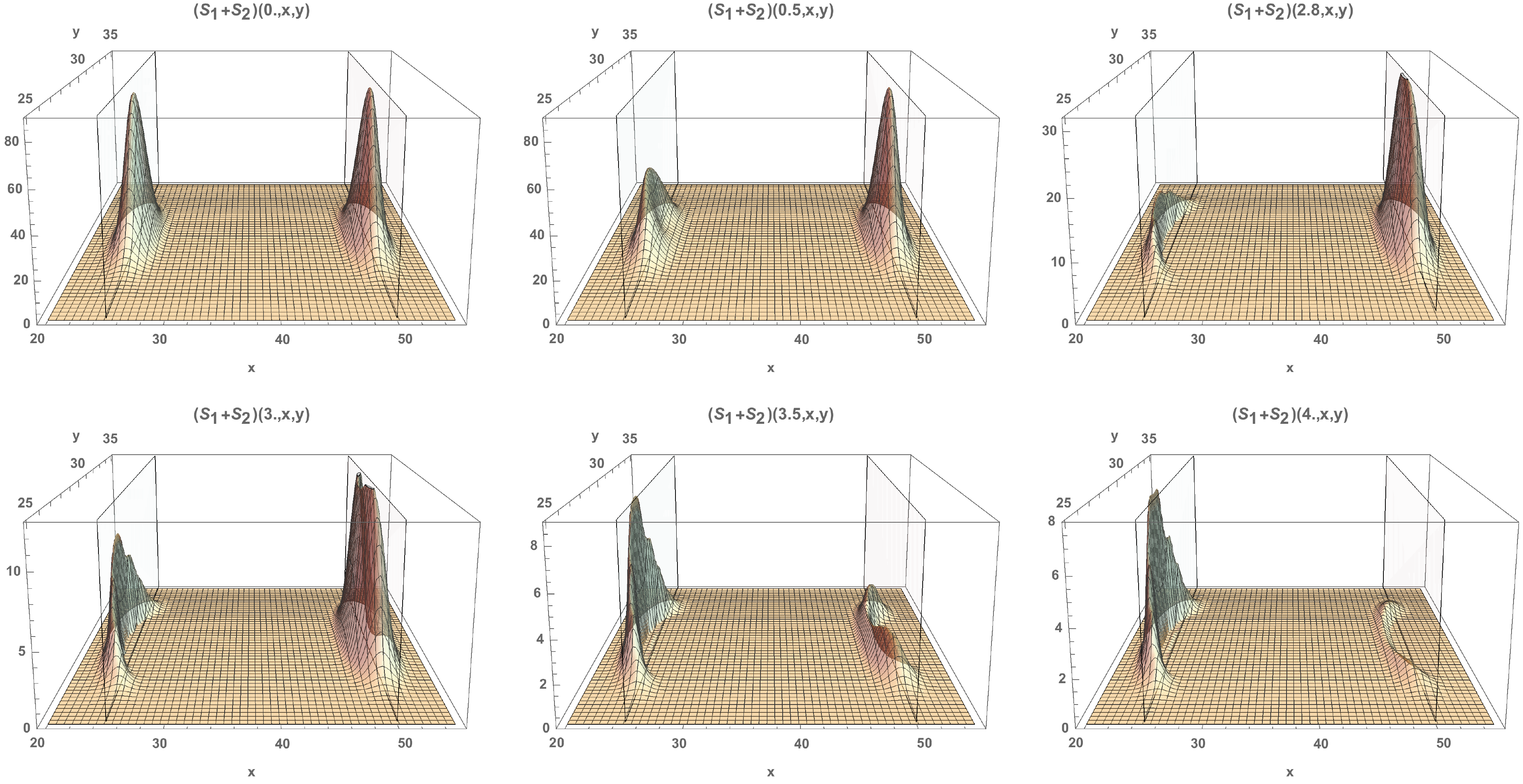}}
\end{center}
\caption{The  spatial  distributions of the host populations $S_j(x,y, t), \, j=1,2$ at times $t = 0, 0.5, 2.8, 3, 3.5, 4$ months.} 
\label{figure6}
\end{figure}

\clearpage

\subsection{Second simulation -  higher advection coefficient}

In the second simulation, with the value of ${\vec C} =-20.0  \left[ \begin{array}{cc}
1   \\
0   \\
\end{array} 
\right]$, 
the wind-aided long-range movement of  BTV infected midges due to advection, plus the short-range movement due to diffusion, is sufficient to transport the infected midges to $\Omega_3$.  (Figures \ref{figure7}, \ref{figure8}, and \ref{figure9}). 

\begin{figure}[ht]
\begin{center}
{\includegraphics[width=6.5in,height=1.6in]{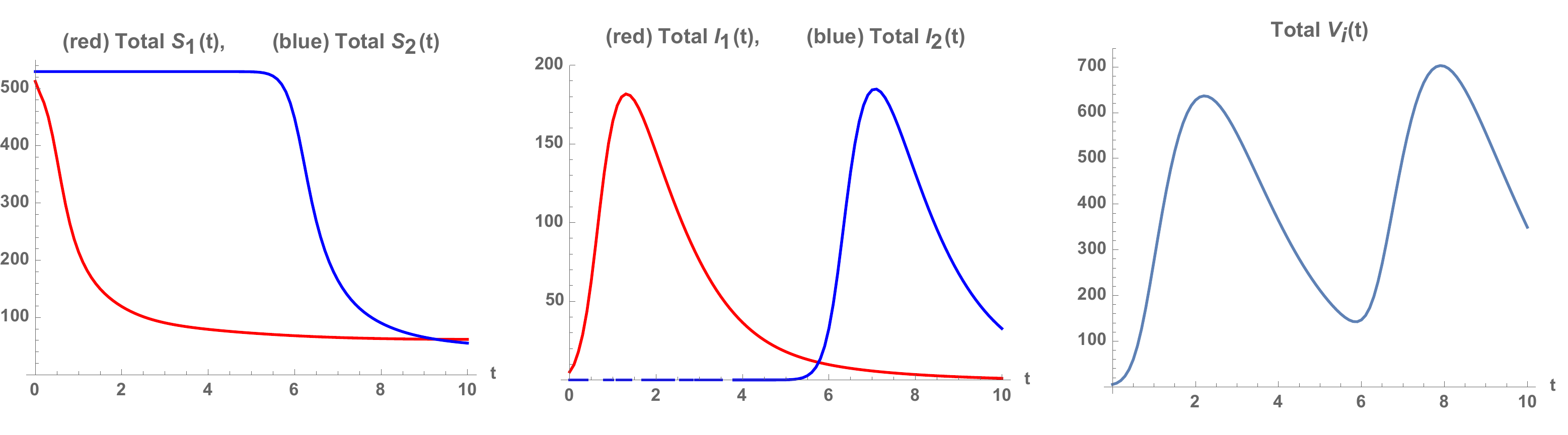}}
\end{center}
\caption{The total populations $S_1(t)$, $S_2(t)$, $I_1(t)$, $I_2(t)$, and $V_I(t)$ over 10 months. The epidemic breaks out in $\Omega_2$ at approximately 5 months. Not all hosts in the 1st site become infected and not
all hosts in the second site $\Omega_2$ become infected.}
\label{figure7}
\end{figure}

\begin{figure}[ht]
\begin{center}
{\includegraphics[width=6.5in,height=3.5in]{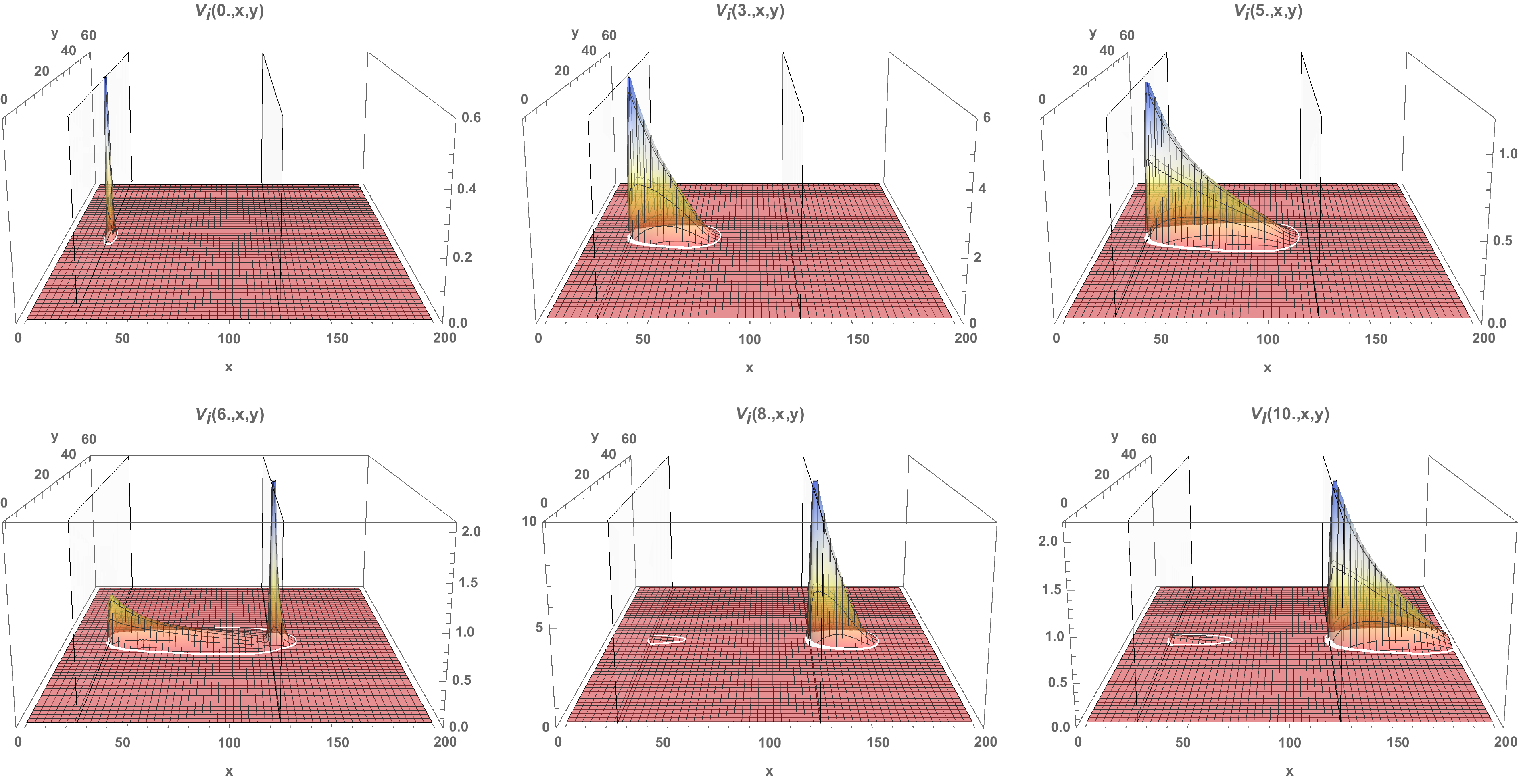}}
\end{center}
\caption{The spatial distributions of the infected vectors $V_i(x,y, t)$ at times $t = 0, 3, 5, 6 ,8,10$ months. The long-range wind-aided advection movement plus the short-range diffusion movement of the infected vectors from the first site $\Omega_1$ is sufficient to initiate an epidemic outbreak at the second site $\Omega_3$ within approximately 5 months.}
\label{figure8}
\end{figure}

\begin{figure}[ht]
\begin{center}
{\includegraphics[width=6.5in,height=3.3in]{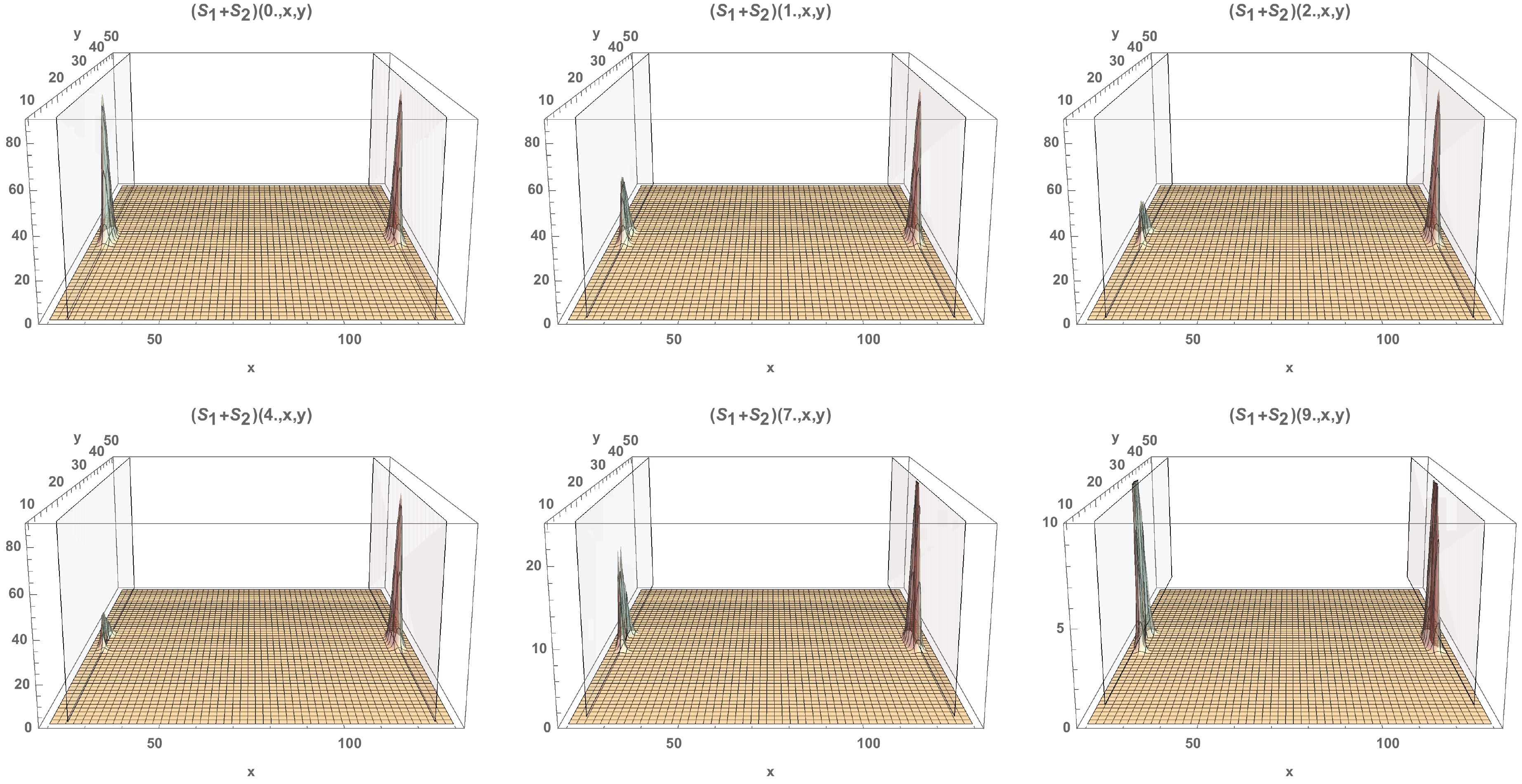}}
\end{center}
\caption{The  spatial  distributions of the host populations $S_j(x,y, t), \, j=1,2$ at times $t = 0, 1, 2, 4, 7, 9$ months.} 
\label{figure9}
\end{figure}

\clearpage

\section{Conclusions and Discussion}

We have investigated a spatial vector-host epidemic model, with hosts confined to small non-overlapping domains, and vectors moving throughout a much larger domain. The motivation of our model is to understand how an epidemic outbreak in one small region can transport to outbreaks in distant regions, in the absence of contact between hosts in these widely separately regions. The spatial movement of vectors is modeled by diffusion terms and advection terms in the model equations. The diffusion terms correspond to general short-range spatial movement and the advection terms corresponds to long-range spatial movement in a specified direction. 

We have analyzed the dynamics of the model and characterized the behavior of solutions over time. Numerical simulations illustrate how the bluetongue disease can spread from one sheep heart to other geographically separated herds . In these simulations the transport of the disease from an outbreak location to a distant location is dependent upon the magnitude of the advection term. The interpretation of the advection term is wind-aided movement of infected midges, which can be carried to distant uninfected sheep, if the wind-aided movement is sufficiently strong, but not so strong that it disperses the infected midges to values too low at out-lying sites. Our simulations have illustrated our model with three host subregions. In reality, there are a very large number of subregions in a much larger region of inhabitation of the midge population. These multiple subregions allow successive subregion-to-subregion inter-transport of infected midges by long-range movement, as represented by the simulations with only two subregions.

Our model is a simplified formulation of the  biological processes in many respects. The model is formulated as a system of continuum partial differential  equations, which relate parameter values of the model equations over time. These parameters capture average values of the dynamical processes, with possibly wide ranges of values represented by these averages,  The reality of the epidemic processes is extremely complex, and is dependent on an extreme variation in the dynamical processes. Our simplified models, however, capture the essential elements of this class of host-vector epidemics, and provide insight into  essential epidemiological behavior.

\clearpage

\end{document}